\newtheorem{prop}{Proposition} [section]
\newtheorem{thm}{Theorem} [section]
\newcommand\Al{\alpha}
\newcommand\B{\beta}
\newcommand\De{\delta}
\newcommand\G{\gamma}
\newcommand\Ep{\varepsilon}
\newcommand\CH{\mathcal{H}}
\newcounter{num} 
\newcommand{\Fg}[1][]{\thenum}
\definecolor{pink}{rgb}{1.0,.4,.0} 
\definecolor{green2}{rgb}{.7,.7,.7} 
\definecolor{pink2}{rgb}{1.,.8,.1} 
\definecolor{blue2}{rgb}{.6,.5,1} 
\definecolor{blue3}{rgb}{.4,.6,.2} 
\definecolor{pink3}{rgb}{.7,.4,.4} 
\definecolor{mag}{rgb}{.6,.0,1.0} 
\definecolor{rd}{rgb}{1.,.0,.0} 
\begin{document}
\bigskip
\bigskip\bigskip

\begin{center}
{\Large \textbf{Haga's theorems in paper folding and 
related theorems in Wasan geometry Part 1}} \\
\bigskip

\bigskip
\textsc{Hiroshi Okumura} \\

\bigskip\bigskip
\end{center}

\textbf{Abstract.} 
Haga's fold in paper folding is generalized. Recent generalization 
of Haga's theorems and problems in Wasan geometry involving Haga's 
fold are also generalized. 

\medskip
\textbf{Keywords.} Haga's fold, Haga's theorems, incircle and excircles 
of a right triangle.

\medskip
\textbf{Mathematics Subject Classification (2010).} 01A27, 51M04 

\bigskip
\bigskip

\section{Introduction}

Let $ABCD$ be a piece of square paper with a point $E$ on the side $DA$. 
We fold the paper so that the corner $C$ coincides with $E$ and the side 
$BC$ is carried into $B'E$, which meets the side $AB$ in a point $F$. We 
call this a Haga's fold. Unifying Haga's theorems in 
paper folding in \cite{Haga}, we obtained the following theorem \cite{OKFG14} 
(see Figure \ref{fhaga0}). 

\begin{center}
\includegraphics[clip,width=51mm]{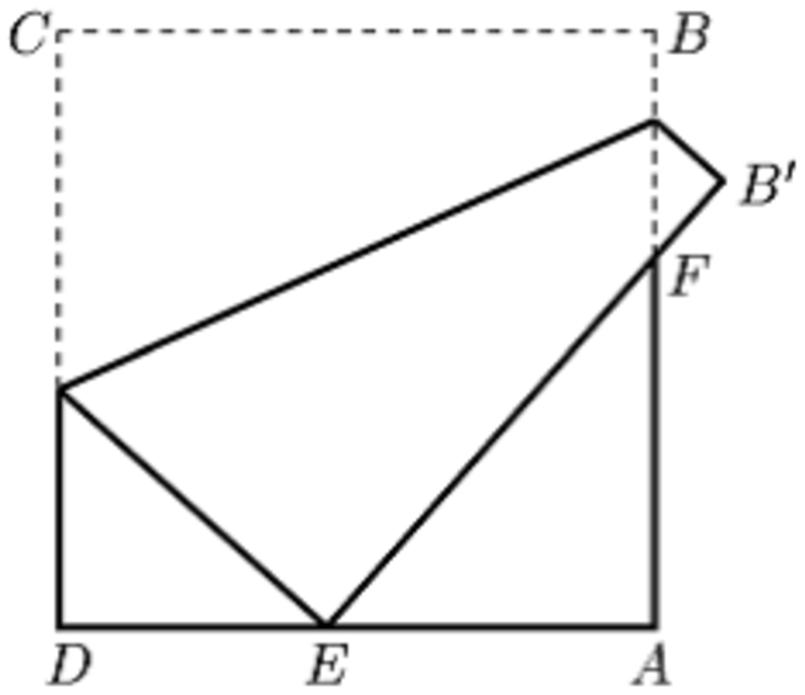}\refstepcounter{num}\label{fhaga0}\\
\medskip 
Figure \Fg .
\end{center} 

\begin{thm}\label{tuh}
The relation $|AE||AF|=2|DE||BF|$ holds for Haga's fold. 
\end{thm}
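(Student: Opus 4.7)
Plan: My approach combines a single angle chase at the vertex $E$ with the perpendicular-bisector description of the crease.

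Let $s$ denote the side length of the square, and let $G$ be the point where the crease meets $CD$. Since the crease is the perpendicular bisector of $CE$, we have $|GE|=|GC|$. Combined with $|DG|+|GC|=s$ and Pythagoras in the right triangle $DGE$ (right-angled at $D$, because $DE\subset DA$ is perpendicular to $DG\subset DC$), this determines $|DG|$ explicitly from $|DE|$ and $s$. No angle information about the fold is used in this step; only the location of the crease.

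The heart of the proof is an angle identity at $E$. The Haga's fold carries the $90^{\circ}$ angle $\angle BCG$ at the corner $C$ to the angle $\angle B'EG$ at its image $E$, so $\angle B'EG=90^{\circ}$. Since $D,E,A$ are collinear and the rays $ED,EG,EB',EA$ occur in this angular order around $E$, the identity $\angle DEG+\angle GEB'+\angle B'EA=180^{\circ}$ yields $\angle AEB'=90^{\circ}-\angle DEG$; and $F$ lies on the ray $EB'$, so this equals $\angle AEF$. The right triangle $AEF$, being right-angled at $A$, then forces $\angle AFE=\angle DEG$, producing the similarity
$$\triangle AEF\sim\triangle DGE\qquad (A\leftrightarrow D,\ E\leftrightarrow G,\ F\leftrightarrow E).$$
Reading off corresponding sides gives $|AE|\cdot|DE|=|AF|\cdot|DG|$.

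Substituting the value of $|DG|$ from the first step and simplifying with $|AE|+|DE|=s$ yields $|AF|=2s|DE|/(s+|DE|)$, whence $|BF|=s-|AF|=s|AE|/(s+|DE|)$. Both $|AE|\cdot|AF|$ and $2|DE|\cdot|BF|$ therefore reduce to $2s|AE|\cdot|DE|/(s+|DE|)$, and the theorem follows.

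I expect the main subtlety to lie in the angle chase. One must verify that $CG$ lies entirely on the folded side of the crease, so that its image under the fold is precisely the segment $EG$ and $\angle B'EG$ really is the image of $\angle BCG$; and that the rays $ED,EG,EB',EA$ emanate from $E$ in the stated cyclic order, so that the three angles add (rather than partly cancel) to $180^{\circ}$. Once these configurational checks are secured the remainder of the proof is essentially a one-line computation.
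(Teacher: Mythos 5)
Your proof is correct, and it takes a genuinely different route from the paper's. The paper does not prove Theorem \ref{tuh} in isolation; it derives it as the special case of Theorem \ref{ghaga}, whose proof rests on showing that the circle $\De$ centered at $C$ through $B$ is tangent to all three lines $AB$, $DA$ and $B'E$ (Proposition \ref{pt}(i)), hence is a tritangent circle of the triangle $AEF$ with contact points $B$ and $D$; the half-angle identity of Theorem \ref{tsin}, applied with the $90^{\circ}$ angle subtended by $\De$ from $A$, then gives $|DE||BF|/(|AE||AF|)=\sin^2 45^{\circ}=1/2$ in one line. You instead work with the point $G$ where the crease meets $CD$, extract $|DG|=(s^2-|DE|^2)/(2s)$ from the isosceles condition $|GE|=|GC|$ and Pythagoras, and obtain the similarity $\triangle AEF\sim\triangle DGE$ by transporting the right angle $\angle BCG$ to $\angle B'EG$ under the fold; the configurational checks you flag (the angular order of $ED,EG,EB',EA$ and the identification $\angle B'EG=\angle BCG$) do hold, and in fact the angle equality needs only that the reflection is an isometry fixing $G$, not that $CG$ physically lies on the folded flap. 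Your argument is more elementary and self-contained (similar triangles plus Pythagoras, no tritangent-circle lemma) and yields the explicit formulas $|AF|=2s|DE|/(s+|DE|)$ and $|BF|=s|AE|/(s+|DE|)$ as a bonus; the paper's approach buys uniformity across the seven cases of the generalized fold, where $E$ may lie anywhere on the line $DA$, and the observation that $\De$ is a tritangent circle of $AEF$ is the engine for everything in Sections 4--6, so the two proofs are optimized for different purposes.
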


In this paper we generalize Haga's fold and show that Theorem \ref{tuh} 
holds for the generalized Haga's fold. 
Haga's fold was also considered in Wasan geometry, and one of the most 
famous results says that the inradius of the right triangle $AEF$ equals 
the overhang $|B'F|$. We also generalize this result and show that not 
only the incircle of $AEF$, but the excircles of it play important roles. 
There are three similar right triangles in the figure made by the 
generalized Haga's fold. We show that there are many simple relationships 
between the incircles and the excircles of those triangles.  

\section{Preliminaries}

In this section we summarize several results, which are used in later 
sections. Let $a=|BC|$, $b=|CA|$ and $c=|AB|$ for a triangle $ABC$. 

\medskip
\begin{minipage}{0.33\hsize}  
\begin{center}
\vskip-2mm
\includegraphics[clip,width=48mm]{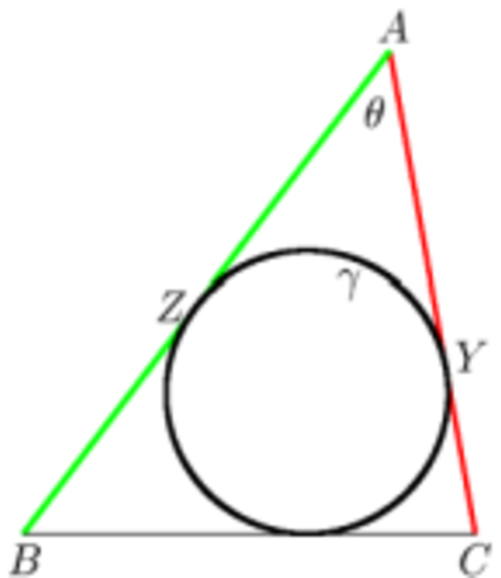}\refstepcounter{num}\label{f3in}\\
\vskip-1.5mm
Figure \Fg . 
\end{center} 
\end{minipage}
\begin{minipage}{0.32\hsize}
\begin{center}
\vskip2mm
\includegraphics[clip,width=48mm]{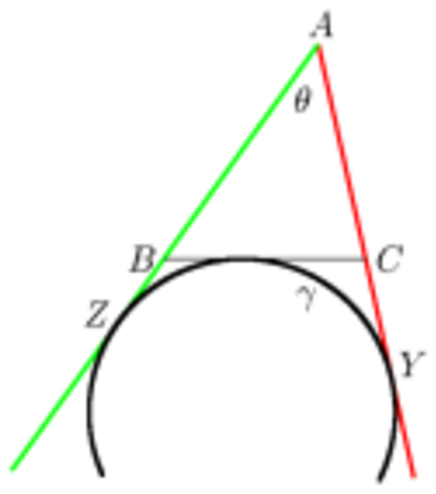}\refstepcounter{num}\label{f3ex}\\
\vskip0mm
Figure \Fg . 
\end{center}
\end{minipage}
\begin{minipage}{0.35\hsize}
\begin{center} 
\vskip4mm
\includegraphics[clip,width=41mm]{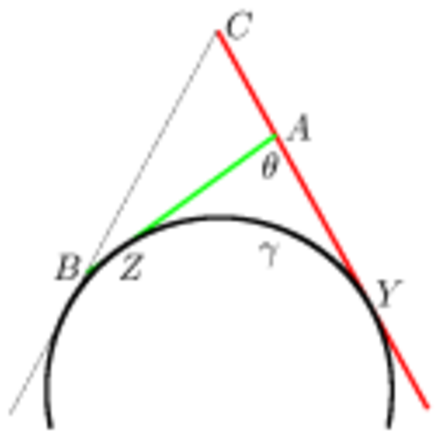}\refstepcounter{num}\label{f3ex2}\\
\vskip1mm
Figure \Fg . 
\end{center} 
\end{minipage} 
\medskip

\begin{thm}\label{tsin}
Let $\G$ be the incircle or one of the excircles of a triangle $ABC$ 
touching the lines $AB$ and $CA$ at points $Z$ and $Y$, respectively. 
If $\theta$ is the angle subtended by $\G$ from $A$, we get 
$$
\sin^2\frac{\theta}{2}=\frac{|BZ||CY|}{bc}.
$$
\end{thm}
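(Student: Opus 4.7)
The plan is to identify $\theta$ with $\angle ZAY$, split into four cases according to which circle $\G$ is, and invoke the standard half-angle identities for the angle $A$. Since $AB$ and $CA$ are the two tangent lines from $A$ to $\G$ with contact points $Z$ and $Y$, the angle subtended at $A$ by $\G$ is exactly $\angle ZAY$. For the incircle and the $A$-excircle, both tangent rays from $A$ lie inside $\angle BAC$, giving $\theta = A$; for the $B$- and $C$-excircles, one tangent ray runs along a side and the other along the opposite extension of the other side, giving $\theta = \pi - A$. Consequently $\sin^2(\theta/2)$ equals either $\sin^2(A/2)$ or $\cos^2(A/2)$, since $\sin((\pi-A)/2) = \cos(A/2)$.

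The core algebraic input is the pair of half-angle formulas
$$\sin^2\!\frac{A}{2} = \frac{(s-b)(s-c)}{bc}, \qquad \cos^2\!\frac{A}{2} = \frac{s(s-a)}{bc},$$
with $s = (a+b+c)/2$. Both follow from the law of cosines combined with $2\sin^2(A/2) = 1 - \cos A$ and $2\cos^2(A/2) = 1 + \cos A$, after factoring $a^2 - (b-c)^2$ and $(b+c)^2 - a^2$ as differences of squares.

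The remaining step is to compute $|BZ|$ and $|CY|$ in each case from the standard tangent-from-a-vertex lengths ($s-a,s-b,s-c$ from $A,B,C$ to the incircle and the permuted analogues for the excircles), paying attention to whether a given contact point lies on a side of the triangle or on its extension past $A$. The results are $(|BZ|,|CY|) = (s-b,s-c)$ for the incircle, $(s-c,s-b)$ for the $A$-excircle, $(s,s-a)$ for the $B$-excircle, and $(s-a,s)$ for the $C$-excircle; dividing the product by $bc$ recovers $\sin^2(A/2)$ in the first two cases and $\cos^2(A/2)$ in the last two, matching $\sin^2(\theta/2)$ in each instance. The only real obstacle is precisely this bookkeeping: for the $B$- and $C$-excircles one contact point lies on an extension beyond $A$, so that (for example) $|BZ|$ for the $B$-excircle is $c$ plus the tangent length $s-c$ from $A$, which equals $s$ and not $s-c$. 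Once these lengths are correctly identified, the identity is immediate from the half-angle formulas.
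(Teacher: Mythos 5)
Your proposal is correct and follows essentially the same route as the paper: identify $\theta$ as $\angle CAB$ for the incircle and $A$-excircle and as $180^{\circ}-\angle CAB$ for the other two excircles, apply the half-angle identity via the law of cosines, and match the resulting factors $(s-b)(s-c)$ or $s(s-a)$ with the tangent lengths $|BZ|$ and $|CY|$. The only cosmetic difference is that the paper derives the tangent lengths by direct segment addition in one representative subcase and leaves the rest to symmetry, whereas you tabulate all four cases using the standard $s-a$, $s-b$, $s-c$ bookkeeping.
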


\begin{proof}
There are two cases to be considered: 
(i) $\G$ is the incircle of $ABC$ or $\G$ touches the side 
$BC$ from the side opposite to $A$, i.e., $\theta=\angle CAB$ (see Figures 
\ref{f3in}, \ref{f3ex}). (ii) $\G$ touches 
the side $AB$ (resp. $CA$) from the side opposite to $C$ (resp. $B$), i.e., 
$\theta+\angle CAB=180^{\circ}$ (see Figure \ref{f3ex2}). We prove the case 
(ii). Let us assume that $\G$ touches the side $AB$ from the side opposite 
to $C$. Then 
\begin{eqnarray}\nonumber
\sin^2\frac{\theta}{2}&=&\frac{1-\cos \theta}{2}=\frac{1+\cos \angle CAB}{2}
=\frac{1+(b^2+c^2-a^2)/(2bc)}{2} \\ \nonumber
&=&\frac{(b+c)^2-a^2}{4bc}=\frac{(a+b+c)(b+c-a)}{4bc}=\frac{|BZ||CY|}{bc}, 
\end{eqnarray}
because $a+b+c=a+b+|BZ|+|AY|=(a+|BZ|)+(b+|AY|)=2|CY|$, and $b+c-a=
(|CY|-|AZ|)+(|AZ|+|BZ|)-(|CY|-|BZ|)=2|BZ|$. 
The rest of the theorem can be proved in a similar way. 
\end{proof}

Let $\G$ be the incircle of $ABC$ with radius $r$, and let $\G_a$ be the 
excircle of $ABC$ touching $BC$ from the side opposite to $A$ with radius 
$r_a$. 

\begin{thm}\label{tgz}
If $ABC$ is a right triangle with right angle at $A$, the following 
statements hold. \\
\noindent{\rm (i)} The remaining common tangent of each of the pairs 
$\G_a$ and $\G_b$, $\G_a$ and $\G_c$, $\G$ and $\G_b$, $\G$ and $\G_c$ 
is perpendicular to $BC$ \cite{OKGZ90}, and $r_a=r+r_b+r_c$ holds 
\cite{hansen}. \\
\noindent{\rm (ii)} $rr_a=r_br_c$ \cite{hansen}. \hskip40mm 
\noindent{\rm (iii)} $r+r_b=b$ and $r+r_c=c$. 
\end{thm}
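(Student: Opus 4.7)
My plan is to derive explicit formulas for all four radii and then read off the three assertions from them; the perpendicularity statement in (i) will follow from the scalar equalities in (iii) via a short projection argument.

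The first step is to establish, for a right triangle with right angle at $A$,
$$
r=\tfrac{b+c-a}{2},\quad r_a=\tfrac{a+b+c}{2},\quad r_b=\tfrac{a+b-c}{2},\quad r_c=\tfrac{a-b+c}{2}.
$$
For the inradius, I use that the tangent length from $A$ to $\G$ equals $s-a$, while the right angle at $A$ forces this tangent length to equal $r$ (the incenter together with $A$ and the two tangent points on $AB$ and $CA$ forms a square of side $r$). The excircle radii come from $r_x=S/(s-x)$ with $S=bc/2$ and $s=(a+b+c)/2$; the denominators rationalise via $(b+c)^2-a^2=2bc$ and its permutations, all consequences of $a^2=b^2+c^2$. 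From these formulas, (iii) is one line, $r+r_b=(b+c-a+a+b-c)/2=b$ and $r+r_c=c$ symmetrically, and the identity $r_a=r+r_b+r_c$ in (i) is obtained by summing the four half-integer expressions. For (ii) the difference of squares yields $4rr_a=(b+c)^2-a^2=2bc$ and $4r_br_c=a^2-(b-c)^2=2bc$, so $rr_a=r_br_c=bc/2$.

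The main obstacle is the perpendicularity claim in (i). For a direction $\hat d$, two circles with centers $I_1,I_2$ and radii $\rho_1,\rho_2$ admit a common tangent perpendicular to $\hat d$ iff $(I_1-I_2)\cdot\hat d=\pm(\rho_1\pm\rho_2)$, since such a tangent meets each circle at an endpoint of its diameter parallel to $\hat d$. Take coordinates $A=(0,0)$, $B=(c,0)$, $C=(0,b)$, so that the four centers are $(r,r)$, $(r_a,r_a)$, $(-r_b,r_b)$, $(r_c,-r_c)$, and let $\hat d=(c,-b)/a$, the unit vector along $BC$. A short computation gives $(I_\G-I_{\G_b})\cdot\hat d=r+r_b$ and $(I_{\G_a}-I_{\G_b})\cdot\hat d=r_a-r_b$, and the remaining two pairs $\{\G,\G_c\}$ and $\{\G_a,\G_c\}$ follow from the reflection $(x,y)\mapsto(y,x)$ (which interchanges $b\leftrightarrow c$ and $\G_b\leftrightarrow\G_c$). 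In each case the required value $\pm(\rho_1\pm\rho_2)$ equals the projection, by virtue of (iii), so the fourth common tangent exists and is perpendicular to $BC$. The subtle point is keeping track of signs so as to distinguish internal from external fourth tangents, but the numerical matches provided by (iii) make this bookkeeping automatic.
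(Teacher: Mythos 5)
Your proposal is correct, but it does considerably more than the paper, which proves only part (iii): the paper simply writes $r+r_b=(-a+b+c)/2+(a+b-c)/2=b$, taking the right-triangle formulas for $r$ and $r_b$ as known and citing the literature for parts (i) and (ii). For (iii) your argument is the same computation, prefaced by a derivation of the four radii from $r=s-a$ and $r_x=S/(s-x)$; your rationalisations check out, though the identity you actually need for $r_b$ and $r_c$ is $a^2-(b-c)^2=2bc$ rather than a literal ``permutation'' of $(b+c)^2-a^2=2bc$. What your approach buys is a self-contained proof of the cited facts: the identity $r_a=r+r_b+r_c$ and the product relation $rr_a=r_br_c=bc/2$ drop out of the four closed forms, and your tangency criterion --- a line perpendicular to $\hat d$ is a common tangent of two circles iff $(I_1-I_2)\cdot\hat d=\pm(\rho_1\pm\rho_2)$ --- together with the explicit centers $(r,r)$, $(r_a,r_a)$, $(-r_b,r_b)$, $(r_c,-r_c)$ gives a clean verification of the perpendicularity claim; I checked that $(I_\G-I_{\G_b})\cdot\hat d=b=r+r_b$ and $(I_{\G_a}-I_{\G_b})\cdot\hat d=c=r_a-r_b$ as you assert. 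One small point you should make explicit: you produce \emph{a} common tangent perpendicular to $BC$, and you should note that it is indeed the \emph{remaining} one, i.e.\ not one of the three side-lines (which are already common tangents of each pair); this is immediate because none of $AB$, $CA$, $BC$ is perpendicular to $BC$ when the right angle is at $A$ and the triangle is nondegenerate, and because each pair of circles in question lies on opposite sides of a common tangent side-line, hence admits exactly four common tangents.
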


\begin{proof} 
$r+r_b=(-a+b+c)/2+(a+b-c)/2=b$. This proves (iii). 
\end{proof}

\section{Generalized Haga's fold}\label{sghf}

In this section, we generalize Haga's fold and Theorem \ref{tuh}. 
Haga's fold is uniquely determined if we fix a point $E$ on 
the side $DA$ for a square $ABCD$. We now consider $E$ is a point on the 
{\sl line} $DA$ for a square $ABCD$ and assume that $m$ is the perpendicular 
bisector of the segment $CE$, $B'$ is the reflection of $B$ in $m$, and $F$ 
is the point of intersection of the lines $AB$ and $B'E$ if they intersect, 
where we define $B=B'=F$ in the case $E=A$. The figure consisting of $ABCD$ 
and the points $B'$, $E$ and $F$ (if exists) is denoted by $\CH$. 
If $ABCD$ is a piece of square paper and $m$ passes through the inside of 
$ABCD$, we can really fold the paper with crease line $m$ so that $BC$ is 
carried into $B'E$ (see Figures from \ref{fcase3} to \ref{fcase71}). We 
thus call $m$ the crease line.  

\medskip\medskip
\begin{minipage}{0.5\hsize} 
\begin{center} 
\includegraphics[clip,width=57mm]{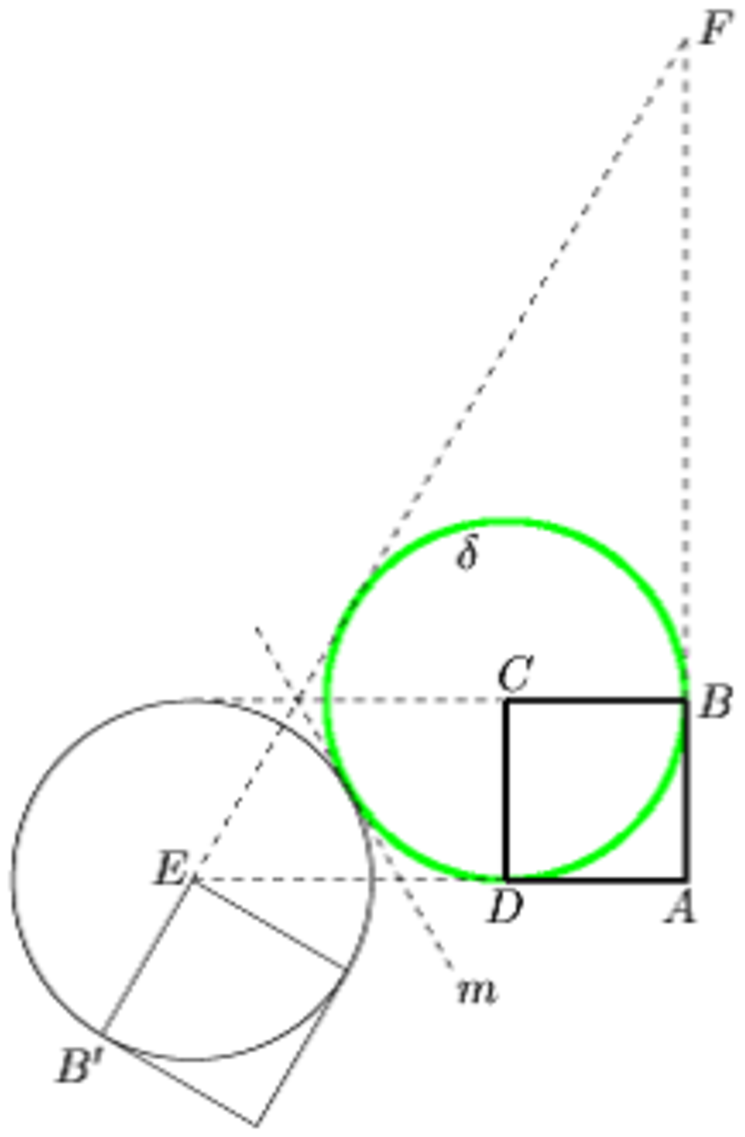}\refstepcounter{num}\label{fcase1}\\
Figure \Fg : (h1) 
\end{center} 
\end{minipage}
\begin{minipage}{0.5\hsize}
\begin{center} 
\vskip12.1mm
\includegraphics[clip,width=57mm]{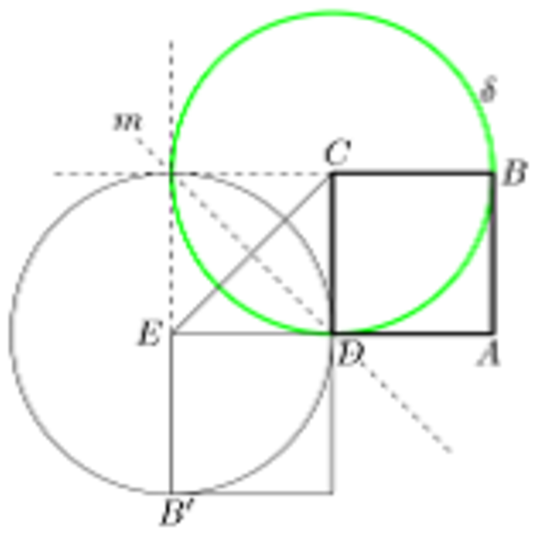}\refstepcounter{num}\label{fcase2}\\
 \vskip12.1mm
Figure \Fg : (h2)
\end{center}  
\end{minipage}
\medskip

\medskip
\begin{minipage}{0.5\hsize} 
\begin{center} 
\includegraphics[clip,width=42mm]{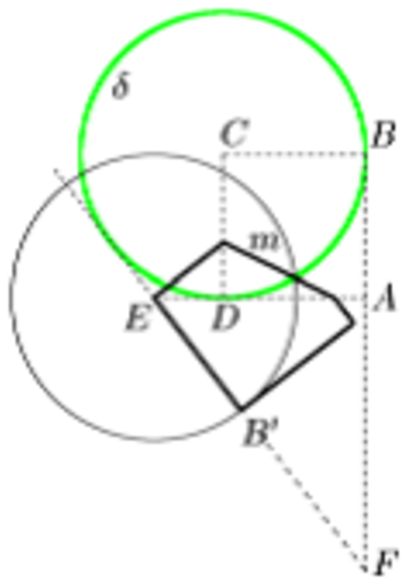}\refstepcounter{num}\label{fcase3}\\
Figure \Fg : (h3) 
\end{center} 
\end{minipage}
\begin{minipage}{0.5\hsize}
\begin{center} 
\vskip8.4mm
\includegraphics[clip,width=41mm]{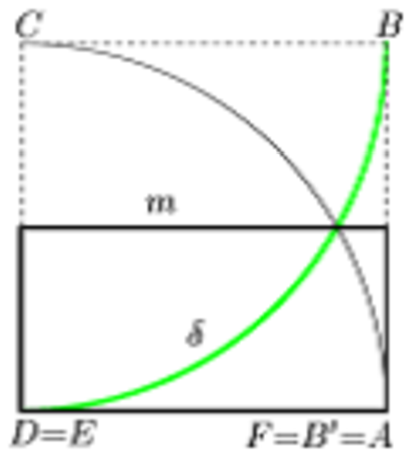}\refstepcounter{num}\label{fcase4}\\
 \vskip5mm
Figure \Fg : (h4)
\end{center}  
\end{minipage}

\medskip
Let $b=|DE|$, $d=|AB|$. 
There are seven cases to be considered for $\CH$ (see Figures 
\ref{fcase1} to \ref{fcase73}):  \\
(h1) $D$ lies between $E$ and $A$ and $b>d$, \hskip1.6mm 
(h2) $D$ is the midpoint of $EA$, \\ 
(h3) $D$ lies between $E$ and $A$ and $b<d$,\ \ (h4) $D=E$, \\
(h5) $E$ lies between $D$ and $A$, \hskip20.3mm (h6) $E=A$, \\
(h7) $A$ lies between $D$ and $E$.  \\

\medskip
\begin{minipage}{0.55\hsize} 
\begin{center} 
\vskip1mm
\includegraphics[clip,width=50mm]{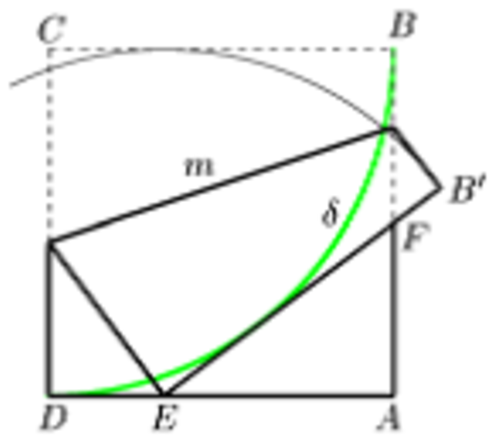}\refstepcounter{num}\label{fcase5}\\
\vskip2mm
Figure \Fg : (h5) 
\end{center} 
\end{minipage}
\begin{minipage}{0.45\hsize}
\begin{center} 
\vskip0.5mm
\includegraphics[clip,width=41mm]{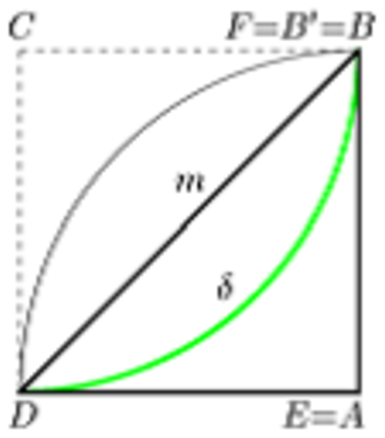}\refstepcounter{num}\label{fcase6}\\
\vskip0.5mm
Figure \Fg : (h6)
\end{center}  
\end{minipage}

\medskip
\begin{minipage}{0.45\hsize}
\begin{center} 
\includegraphics[clip,width=51mm]{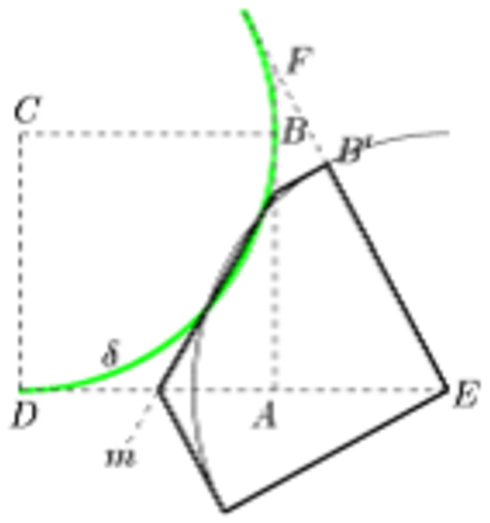}\refstepcounter{num}\label{fcase71}\\
Figure \Fg : (h7), $|AE|<\sqrt{2}d$
\end{center}  
\end{minipage}
\begin{minipage}{0.55\hsize} 
\begin{center} 
\includegraphics[clip,width=59mm]{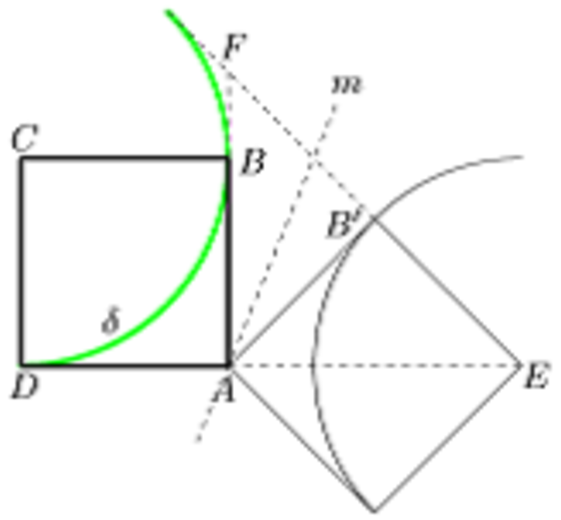}\refstepcounter{num}\label{fcase72}\\
Figure \Fg : (h7), $|AE|=\sqrt{2}d$
\end{center}  
\end{minipage}

\medskip
\begin{center} 
\includegraphics[clip,width=88mm]{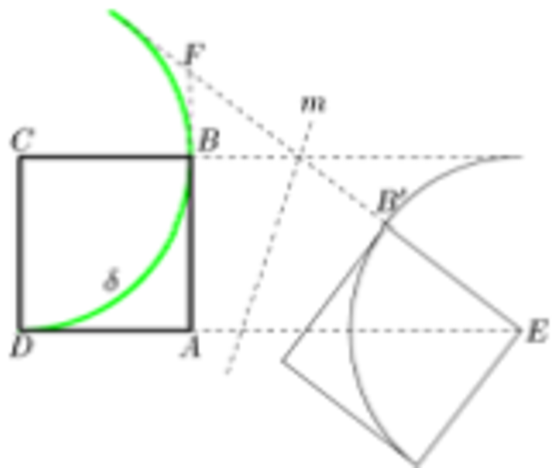}\refstepcounter{num}\label{fcase73}\\
\vskip-1mm
Figure \Fg : (h7), $|AE|>\sqrt{2}d$
\end{center}  

\medskip
The point $F$ does not exists only in the case (h2). Let $c=|BF|$, 
if $F$ exist. Let $\De$ be the circle with center $C$ passing through $B$. 

\begin{prop}\label{pt} The followings hold for $\CH$. \\
\noindent{\rm (i)} 
The line $B'E$ touches the circle $\De$. \\
\noindent{\rm (ii)} 
$|EF|=\left\{ 
\begin{array}{ll}\nonumber 
    c-b & \hbox{in the case {\rm (h3)}}, \\
    b+c & in\ the\ cases\ {\hbox {\rm(h1), (h4), (h5), (h6)}}, \\
    b-c & \hbox{in the case {\rm (h7)}}. \\
\end{array} \right.
$
\end{prop}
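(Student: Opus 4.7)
The plan is to combine the reflection $\phi$ in the crease line $m$ with the equal-tangent-lengths theorem for the circle $\De$.

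For part (i), since $m$ is the perpendicular bisector of $CE$, the reflection $\phi$ swaps $C$ and $E$ and sends $B$ to $B'$. Hence $|EB'|=|CB|=d$ and $\angle CEB'=\angle ECB$, so by SAS the triangles $CBE$ and $EB'C$ are congruent under the vertex correspondence $C\leftrightarrow E$, $B\leftrightarrow B'$, $E\leftrightarrow C$. Under this congruence, the altitude from the vertex $E$ of $\triangle CBE$ to the opposite side $CB$ is mapped to the altitude from the vertex $C$ of $\triangle EB'C$ to the opposite side $B'E$. But line $CB$ is parallel to line $DA$ at distance $d$ (the width of the square) and $E$ lies on line $DA$, so the first altitude equals $d$. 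Hence the distance from $C$ to line $B'E$ equals $d$, the radius of $\De$, which proves that $B'E$ touches $\De$.

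For part (ii), let $T$ denote the point of tangency of $B'E$ with $\De$, with the convention $T=D$ when $E=D$. The line $DA$ is tangent to $\De$ at $D$ (since $CD\perp DA$) and the line $AB$ is tangent to $\De$ at $B$ (since $CB\perp AB$). Applying the equal-tangent-lengths theorem from the external points $E\in$ line $DA$ and $F\in$ line $AB$ gives
\[
|ET|=|ED|=b\quad\text{and}\quad |FT|=|FB|=c.
\]
Depending on the cyclic order of $E$, $T$, $F$ along the line $B'E$, this forces $|EF|$ to equal one of $b+c$, $c-b$, or $b-c$.

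The main obstacle is the configuration analysis that identifies the correct ordering in each of the six cases where $F$ exists. I plan to settle it by coordinatizing with $D=(0,0)$, $A=(d,0)$, $C=(0,d)$, and $E=(e,0)$ with $e$ signed (so $b=|e|$, with $e<0$precisely in cases (h1)--(h3)). A short projection calculation using the unit vector in the direction $\Ov{EB'}$ then shows that the signed parameter of $T$ along the ray $\Ov{EB'}$ equals $e$, while that of $F$ equals $(d^2+e^2)/(d+e)$; the denominator vanishes precisely in case (h2), consistently with $F$ failing to exist there. Comparing these two signed parameters against $0$ and against each other separates the remaining six cases: (h1), (h4), (h5), (h6) place $T$ strictly between $E$ and $F$, yielding $|EF|=b+c$; case (h3) places $E$ between $T$ and $F$, yielding $|EF|=c-b$; and case (h7) places $F$ between $E$ and $T$, yielding $|EF|=b-c$.
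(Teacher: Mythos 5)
Your argument is correct and takes essentially the same route as the paper: part (i) uses the reflection in $m$ (which swaps $C$ and $E$ and sends $B$ to $B'$) to transport the distance-$d$ statement from the pair $E$, line $BC$ to the pair $C$, line $B'E$, and part (ii) is the equal-tangent-lengths theorem applied from $E$ and $F$. Your coordinate computation for the ordering of $E$, $T$, $F$ (the parameter values $e$ and $(d^2+e^2)/(d+e)$ are correct, and they do sort into the three stated sign patterns) simply makes explicit the case analysis that the paper compresses into ``The part (ii) follows from (i).''
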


\begin{proof}
The circle of radius $d$ with center $E$ touches the line $BC$ (see 
Figures \ref{fcase1} to \ref{fcase73}). While the reflections of this 
circle and the line $BC$ in the crease line $m$ are $\De$ and the line 
$B'E$. This proves (i). The part (ii) follows from (i). 
\end{proof}

Notice that $b=0$ and $|AF|=0$ are equivalent, also $c=0$ and 
$|AE|=0$ are equivalent. Therefore $|AE||AF|=0$ if and only if $bc=0$. 
The next theorem is a generalization of the result in \cite{OKFG14}. 

\begin{thm}\label{ghaga} 
If the lines $AB$ and $B'E$ intersect, then $|AE||AF|=2bc$ for $\CH$. 
\end{thm}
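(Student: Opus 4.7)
The plan is to recognise the circle $\Delta$ from Proposition \ref{pt} as one of the four tritangent circles (incircle or an excircle) of the right triangle $AEF$, and then to apply Theorem \ref{tsin} at the vertex $A$, where the subtended angle comes out to a right angle regardless of which of the four circles $\Delta$ happens to be.

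First I would note that $\triangle AEF$ has a right angle at $A$, because $E$ lies on line $DA$ and $F$ on line $AB$, and these two lines meet perpendicularly at the corner $A$ of the square. Next, $\Delta$ is tangent to each of the three side-lines of $\triangle AEF$: to line $DA$ at $D$ and to line $AB$ at $B$ (since $CD\perp DA$, $CB\perp AB$ and $|CD|=|CB|=d$), and to the line $B'E=EF$ by Proposition \ref{pt}(i). Since any circle tangent to the three side-lines of a triangle must be its incircle or one of its three excircles, $\Delta$ is one of the four tritangent circles of $\triangle AEF$.

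The uniformising observation is now that, whichever of the four $\Delta$ happens to be, the angle $\theta$ subtended from $A$ equals $90^\circ$. In the language of Theorem \ref{tsin}: if $\Delta$ is the incircle or the excircle opposite $A$, case (i) gives $\theta=\angle EAF=90^\circ$; if $\Delta$ is the excircle opposite $E$ or $F$, case (ii) gives $\theta=180^\circ-\angle EAF=90^\circ$. In either case $\sin^2(\theta/2)=1/2$.

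To finish, I would apply Theorem \ref{tsin} to $\triangle AEF$ under the correspondence $A\leftrightarrow A$, $B\leftrightarrow E$, $C\leftrightarrow F$. The tangent points of $\Delta$ on the two sides issuing from $A$ are $Z=D$ on line $AE$ and $Y=B$ on line $AF$, so $|BZ|=|ED|=b$ and $|CY|=|FB|=c$, while the two sides adjacent to $A$ have lengths $|CA|=|AF|$ and $|AB|=|AE|$. Substituting into $\sin^2(\theta/2)=|BZ||CY|/(bc)$ yields
\[
\tfrac12=\frac{bc}{|AE|\,|AF|},
\]
and rearranging gives $|AE|\,|AF|=2bc$. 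The main delicate point is that the tangency type of $\Delta$ genuinely varies across the cases (h1), (h3)--(h7) (and matches up with which of the three sub-formulas of Proposition \ref{pt}(ii) applies for $|EF|$), so one must believe that every such tangency falls into case (i) or case (ii) of Theorem \ref{tsin}; but because both cases here yield $\theta=90^\circ$, this case distinction never appears in the computation and the argument goes through uniformly.
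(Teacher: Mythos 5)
Your proof is correct and takes essentially the same approach as the paper: both apply Theorem \ref{tsin} to the right triangle $AEF$ with the tritangent circle $\De$ (tangent to line $AE$ at $D$ and to line $AF$ at $B$, and to $EF$ by Proposition \ref{pt}(i)), so that the angle subtended from $A$ is $90^\circ$ and $\sin^2 45^\circ = bc/(|AE||AF|)$. The only difference is that the paper first disposes of the degenerate possibility $|AE||AF|=0$ (equivalently $bc=0$) in one line, which your argument tacitly assumes away by treating $AEF$ as a proper triangle.
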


\begin{proof} 
If $|AE||AF|=0$, $bc=0$. Let $|AE||AF|\not=0$. Considering the points 
$E$, $F$ as $B$, $C$ in Theorem \ref{tsin} and $D$, $B$ as $Z$, $Y$, 
respectively, we have 
$$
\frac{bc}{|AE||AF|}=\sin^2 45^{\circ}=\frac{1}{2}. 
$$
\end{proof}

\section{Incircle and excircles of $AEF$} \label{sie}

From now on we assume that the lines $AB$ and $B'E$ intersect. Therefore 
the case (h2) is not considered. We consider the result in Wasan geometry 
saying that the inradius of $AFE$ equals the overhang $|B'F|$ in the case 
(h5) \cite{Fuku}, \cite{ito}, \cite{Tkd}, \cite{ukyojutsu}. The result has 
been quoted from a sangaku problem dated 1893 in \cite{Fuku}, but the book 
\cite{Tkd} is older. Several new results related to this are also given 
in this section. We now call (h1), (h3), (h5) and (h7) the ordinary cases, 
and (h4) and (h6) the degenerate cases. 

We define three circles (see Figures \ref{fcase1c} to \ref{fcase71c}). 
Let $\Al(\not=\De)$ be the incircle or one of the excircles of $AEF$ 
with center lying on the line $AC$ in the ordinary cases, the point $A$ 
in the degenerate cases. 
Let $\B(\not=\Al)$ be the incircle or one of the excircles of $AEF$ with 
center lying on the line joining $E$ and the center of $\Al$ in the ordinary 
cases, the point $A$ in the case (h4), and the reflection of the circle 
$\De$ in the line $AB$ in the case (h6).
Let $\G(\not=\Al)$ be the incircle or one of the excircles of $AEF$ with 
center lying on the line joining $F$ and the center of $\Al$ in the 
ordinary cases, the reflection of $\De$ in the line $DA$ in the case (h4), 
and the point $A$ in the case (h6). Let $a=|B'F|$. 

\begin{thm} \label{tep1} 
The radii of the circles $\Al$, $\B$ and $\G$ equal $a$, $b$ and $c$, 
respectively. 
\end{thm}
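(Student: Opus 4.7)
The plan is to exploit that $\De$ itself is always one of the four tritangent circles of $\triangle AEF$ in any ordinary case. Indeed, $|CD|=|CB|=d$ and these radii are perpendicular to the lines $AE$ and $AF$ at $D$ and $B$, so $\De$ is tangent to the two legs of $\triangle AEF$; tangency to $EF$ is exactly Proposition~\ref{pt}. The center $C$ lies on the square's diagonal $AC$, and this diagonal is one of the two angle bisectors of the right angle $\angle EAF$ --- the internal bisector when $E$ and $F$ lie on the same sides of $A$ as $D$ and $B$, the external bisector otherwise. Since $\Al$ is also tritangent with center on $AC$ and is distinct from $\De$, the pair $\De,\Al$ consists either of the incircle and the excircle opposite $A$ of $\triangle AEF$, or of the excircles opposite $E$ and $F$.

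For the radius of $\Al$: for any two tritangent circles whose centers lie on the same angle bisector at the right-angle vertex, the product of the radii equals $|AE|\cdot|AF|/2$. Writing $p=|AE|$, $q=|AF|$, $h=|EF|$ and using the standard right-triangle formulas (incircle $(p+q-h)/2$, excircle opposite $A$: $(p+q+h)/2$, opposite $E$: $(-p+q+h)/2$, opposite $F$: $(p-q+h)/2$), both pairs of products equal $pq/2$; the equality of the two products is also Theorem~\ref{tgz}(ii). By Theorem~\ref{ghaga}, $pq=2bc$, so the common product equals $bc$. Since $\De$ has radius $d$, the radius of $\Al$ is $bc/d$. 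Finally, the identity $ad=bc$ follows in each ordinary case by substituting the case-specific formulas for $|AE|$ and $|AF|$ into $|AE|\cdot|AF|=2bc$: for example in case (h5) we have $|AE|=d-b$, $|AF|=d-c$ and $a=d-b-c$, and $(d-b)(d-c)=2bc$ rearranges to $d(d-b-c)=bc$; the cases (h1), (h3), (h7) are analogous.

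For $\B$: its center lies on the line through $E$ and the center of $\Al$, which must be an angle bisector at $E$; hence $\B$ is the tritangent circle other than $\Al$ whose center lies on that same bisector at $E$. Identifying in each of the ordinary cases which tritangent circle this is (in (h5), (h1), (h3), (h7) respectively the excircle opposite $E$, $F$, the incircle, and the excircle opposite $A$), substitution of the case-dependent values of $p,q,h$ into the corresponding radius formula yields $b$: for instance in (h5) the excircle opposite $E$ has radius $(-(d-b)+(d-c)+(b+c))/2=b$. The argument for $\G$ is symmetric under $E\leftrightarrow F$, giving radius $c$. The degenerate cases (h4) and (h6) are verified by inspection: there $\Al$, $\B$ or $\G$ is a point circle at $A$, or the reflection of $\De$ in a side of the square, and in each case its radius matches the prescribed value of $a$, $b$, or $c$. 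The main obstacle is not any deep step but the case-by-case bookkeeping --- correctly identifying which of the four tritangent circles of $\triangle AEF$ each of $\Al,\B,\G$ denotes in each configuration, and verifying the algebraic identity $ad=bc$ in each.
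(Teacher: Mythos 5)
Your argument is correct, and for $\B$ and $\G$ it is essentially the paper's own proof: identify which tritangent circle of $AEF$ the definition singles out in each ordinary case and evaluate the corresponding tangent-length formula with the case-specific values of $|AE|$, $|AF|$, $|EF|$ (the paper does exactly the computation $\frac{|AE|-|AF|+|EF|}{2}=b$ in case (h1) and leaves the rest as ``similar''). Where you genuinely diverge is the radius of $\Al$. The paper computes it directly as a half-perimeter-type expression and identifies it with the overhang on the spot, e.g.\ in (h1)
\[
\frac{|AE|+|AF|+|EF|}{2}=\frac{(b+d)+(c+d)+|EF|}{2}=d+|EF|=|B'E|+|EF|=|B'F|=a,
\]
which needs neither Theorem \ref{ghaga} nor the identity $ad=bc$. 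You instead pair $\Al$ with $\De$ on the bisector through $A$, use the product identity $r_{\Al}r_{\De}=|AE||AF|/2$ together with Theorem \ref{ghaga} to get $r_{\Al}=bc/d$, and then verify $ad=bc$ case by case; this is valid and not circular (your derivation of $ad=bc$ uses only Proposition \ref{pt}(ii), $|B'E|=d$ and Theorem \ref{ghaga}, not the statement being proved), but it is the long way around, and note that the paper obtains $ad=bc$ only afterwards, as Theorem \ref{teqs}(ii), precisely as a corollary of this theorem. Two things your write-up adds that the paper merely asserts: an actual proof that $\De$ is one of the four tritangent circles of $AEF$, and the explicit identification of which tritangent circle each of $\Al,\B,\G,\De$ is in each case (I checked your list --- excircle opposite $E$, $F$, the incircle, excircle opposite $A$ for $\B$ in (h5), (h1), (h3), (h7) --- and it is right, though in cases (h3) and (h7) this hinges on $F$, respectively $E$, lying on the ray opposite to $AB$, respectively $AD$, so that $AC$ is the \emph{external} bisector of $\angle EAF$; be sure to say this when you do the bookkeeping).
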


\begin{proof} We prove that the radius of $\Al$ equals $a$. The degenerate 
cases are trivial. The proof in the ordinary cases is similar to that for 
Haga's fold given in \cite{Hons}. We prove the case (h1) (see Figure 
\ref{fcase1c}). By Proposition \ref{pt}(ii), the radius of $\Al$ equals 
$$
\frac{|AE|+|AF|+|EF|}{2}=\frac{(b+d)+(c+d)+|EF|}{2} 
=d+|EF|=a.
$$
The other cases can be proved in a similar way. We now prove that the radius 
of $\B$ equals $b$. The degenerate cases are obvious. In the case (h1) (see 
Figure \ref{fcase1c}), the radius of $\B$ equals 
$$
\frac{|AE|-|AF|+|EF|}{2}=\frac{(b+d)-(c+d)+(b+c)}{2}=b 
$$
by Proposition \ref{pt}(ii). The rest of the theorem can be proved 
in a similar way. 
\end{proof}

\begin{center} 
\vskip3mm
\includegraphics[clip,width=99mm]{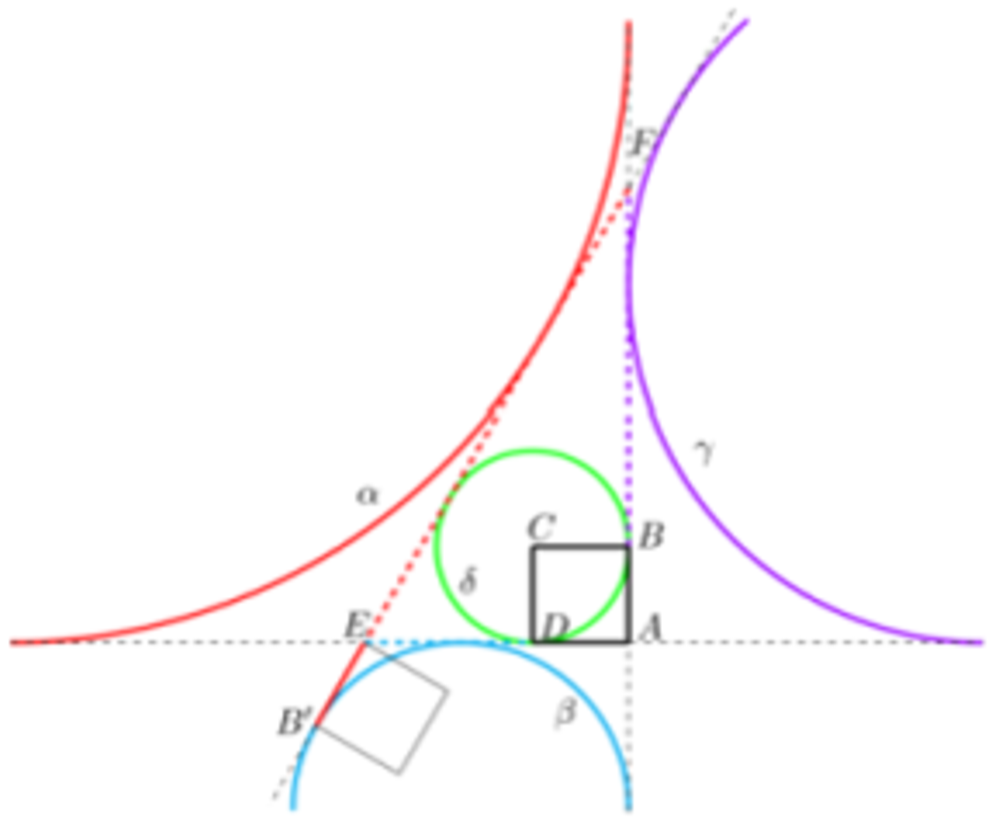}\refstepcounter{num}\label{fcase1c}\\
\vskip8mm
Figure \Fg : (h1)		
\end{center}  
\medskip

\begin{center} 
\includegraphics[clip,width=95mm]{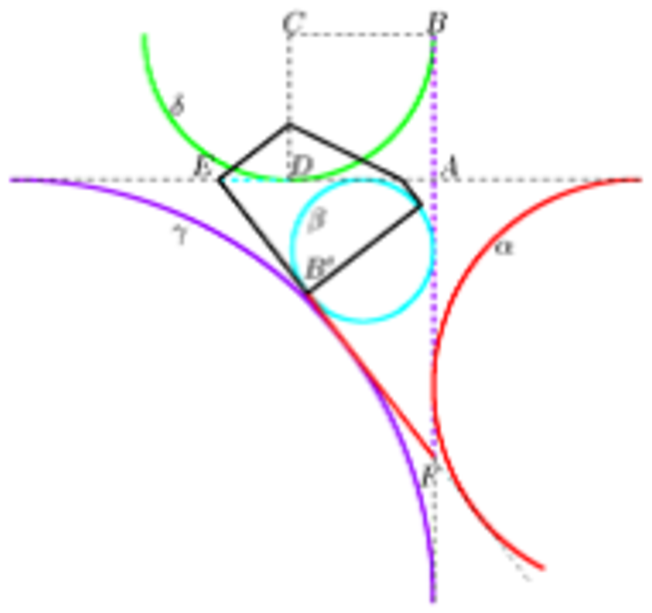}\refstepcounter{num}\label{fcase3c}\\
Figure \Fg : (h3)
\end{center}

\begin{minipage}{0.4\hsize}
\begin{center} 
\vskip0mm
\includegraphics[clip,width=55mm]{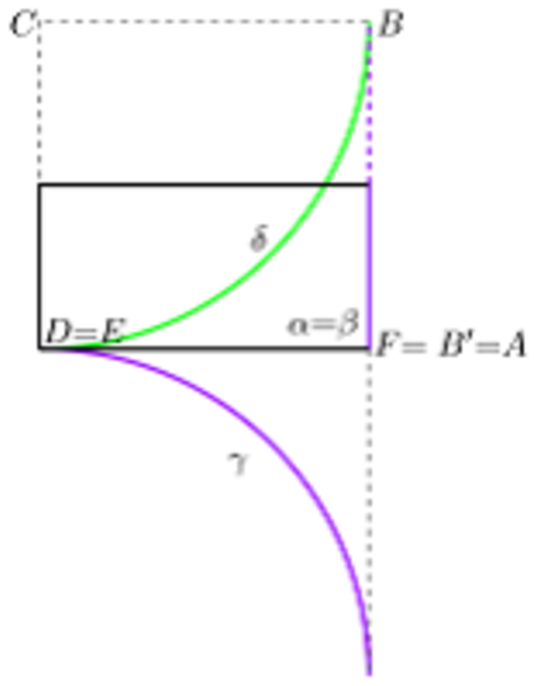}\refstepcounter{num}\label{fcase4c}\\
\vskip0mm
Figure \Fg : (h4)
\end{center}  
\end{minipage}
\begin{minipage}{0.6\hsize}
\begin{center} 
\includegraphics[clip,width=65mm]{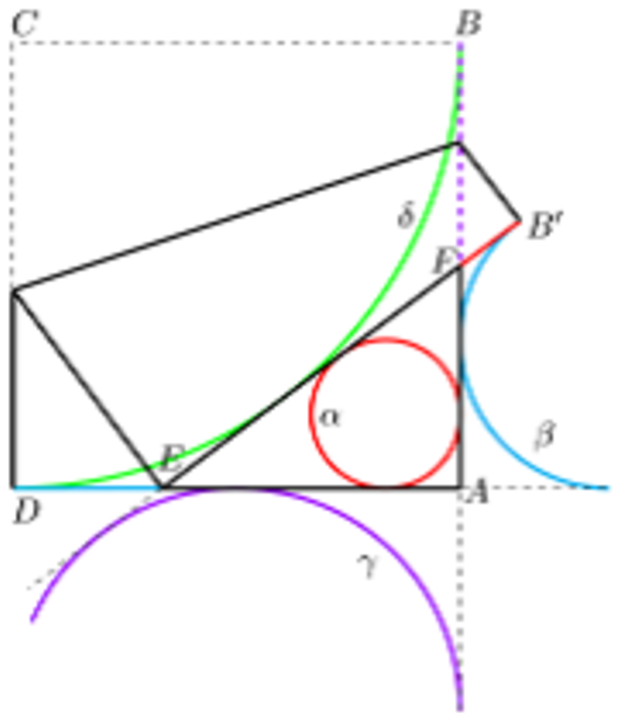}\refstepcounter{num}\label{fcase5c}\\
Figure \Fg : (h5)
\end{center}  
\end{minipage}

\begin{minipage}{0.4\hsize}
\begin{center} 
\vskip4mm 
\includegraphics[clip,width=55mm]{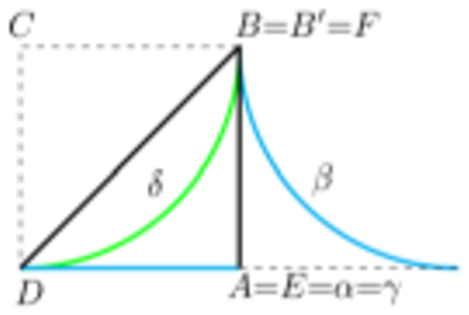}\refstepcounter{num}\label{fcase6c}
\vskip0mm 
Figure \Fg : (h6)
\end{center}  
\end{minipage}
\begin{minipage}{0.6\hsize}
\begin{center} 
\includegraphics[clip,width=69mm]{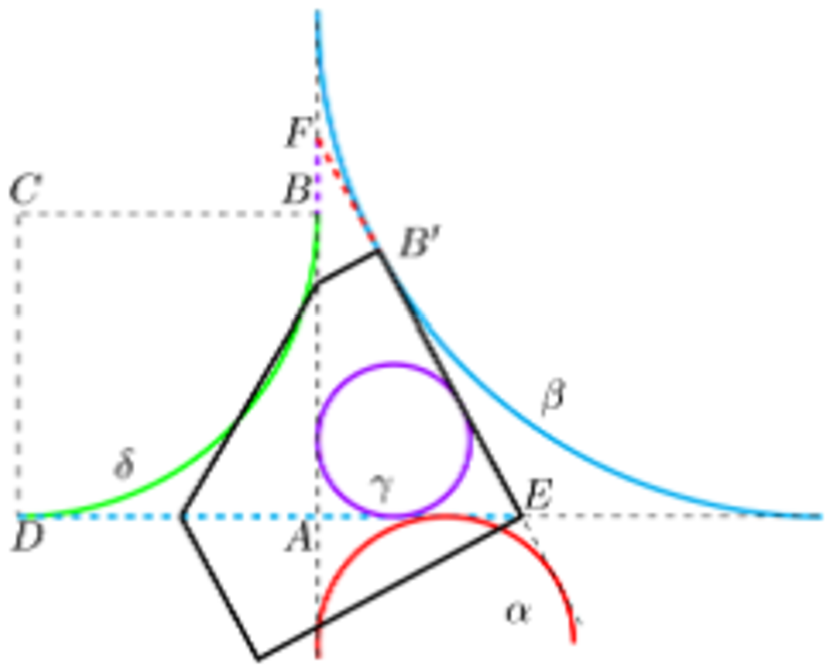}\refstepcounter{num}\label{fcase71c}\\
Figure \Fg : (h7)
\end{center}  
\end{minipage}
\medskip

The set $\{\Al, \B, \G, \De \}$ consists of the incircle and the 
excircles of the triangle $AEF$, if $AEF$ is proper (see Figures \ref{fcc1} 
to \ref{fcc71}). Therefore $a=b+c+d$, $c=a+b+d$, $d=a+b+c$, $b=a+c+d$ 
hold in each of the cases (h1), (h3), (h5), (h7), respectively by Theorem 
\ref{tgz}. The equation $d=a+b+c$ also hold in the degenerate cases. 
Hence we get (i) of the following theorem. The part (ii) follows from 
Theorem \ref{tgz}(ii).

\begin{thm} \label{teqs}The following relations hold. \\
\noindent {\rm (i)} 
$d=\left\{ 
\begin{array}{llll}\nonumber
\hskip3.25mm a-b-c & \hbox{in the case {\rm (h1)}}, \\ 
 -a-b+c & \hbox{in the case {\rm (h3)}}, \\ 
\hskip3.25mm a+b+c & \hbox{in the case {\rm (h4), (h5), (h6)}}, \\ 
 -a+b-c            & \hbox{in the case {\rm (h7)}}. \\ 
\end{array} \right. \\
$
\\
\noindent {\rm (ii)} 
$ad=bc$.
\end{thm}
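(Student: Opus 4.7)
The key step, previewed in the paragraph preceding the statement, is to identify $\{\Al,\B,\G,\De\}$ with the set of tritangent circles (incircle and three excircles) of the right triangle $AEF$ in each of the four ordinary cases. This is visible in Figures \ref{fcase1c}--\ref{fcase71c}: the centers of $\Al$, $\B$, $\G$ lie on the appropriate internal or external angle bisectors of $AEF$ by the construction in Section \ref{sie}, while $\De$ (centered at $C$ with radius $|AB|$) turns out to be tangent to each of the three sides of $AEF$ or their extensions. Since $E$ lies on the line $DA$ and $F$ on the line $AB$, the triangle $AEF$ has its right angle at $A$, so once this identification is in hand both parts of the theorem follow by applying Theorem \ref{tgz} to $AEF$.

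For part (i), Theorem \ref{tgz}(i) says that the excircle opposite the right-angle vertex has radius equal to the sum of the other three tritangent radii, so exactly one of $a,b,c,d$ must equal the sum of the other three. Inspecting each figure to locate the ``large'' excircle (the one whose center lies on the far side of $EF$ from $A$) identifies it as $\Al$ in (h1), $\G$ in (h3), $\De$ in (h5), and $\B$ in (h7), yielding $a=b+c+d$, $c=a+b+d$, $d=a+b+c$, $b=a+c+d$ respectively, and solving for $d$ produces the four formulas in (i). For the degenerate cases (h4) and (h6) the triangle $AEF$ collapses; two of $a,b,c$ vanish while the third equals $d$, so $d=a+b+c$ is immediate from the definitions in Section \ref{sie}.

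For part (ii), I would invoke Theorem \ref{tgz}(ii), namely $rr_a=r_br_c$ in the right triangle $AEF$. Using the standard half-sum formulas for the four tritangent radii together with the values of $|AE|$, $|AF|$, $|EF|$ supplied by Proposition \ref{pt} and the Pythagorean theorem in each configuration, I would check case by case that the two products $r\cdot r_a$ and $r_b\cdot r_c$ come out as $ad$ and $bc$ in some order, so that $ad=bc$ in every ordinary case; in the degenerate cases both sides reduce to $0$. The principal obstacle is the bookkeeping: in each of the six configurations one must correctly match $\{\Al,\B,\G,\De\}$ with $\{r,r_a,r_b,r_c\}$, and the pairing $\{r,r_a\}$ versus $\{r_b,r_c\}$ is not constant across cases (for instance $\{r,r_a\}=\{a,d\}$ in (h1) and (h5), but $\{r,r_a\}=\{b,c\}$ in (h3) and (h7)), even though the final identity $ad=bc$ emerges uniformly.
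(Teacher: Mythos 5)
Your proposal is correct and follows essentially the same route as the paper, whose entire argument is the paragraph preceding the theorem: identify $\{\Al,\B,\G,\De\}$ as the incircle and excircles of the right triangle $AEF$ (with radii $a,b,c,d$ by Theorem \ref{tep1}), then read off part (i) from Hansen's relation in Theorem \ref{tgz}(i) by locating the excircle opposite $A$ in each case, and part (ii) from Theorem \ref{tgz}(ii). Your observation that the pairing of $\{r,r_a\}$ versus $\{r_b,r_c\}$ with $\{a,d\}$ versus $\{b,c\}$ switches between the cases where $AC$ is the internal bisector of $\angle EAF$ and those where it is the external bisector is a correct detail that the paper passes over silently, and it does not affect the conclusion since $ad=bc$ holds under either matching.
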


\begin{thm}\label{tbp}
The circle $\B$ touches $B'E$ at the point $B'$ or coincides with $B'$. 
\end{thm}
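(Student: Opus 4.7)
The plan is to dispatch the two degenerate cases by inspection and reduce the four ordinary cases to a tangent-length computation. In case (h4), $b=0$ and $\B$ is the single point $A$; since also $B'=A$ here, the ``coincides with $B'$'' alternative in the statement holds. In case (h6), $\B$ is by definition the reflection of $\De$ in line $AB$; since $\De$ is tangent to $AB$ at $B$ (as $CB\perp AB$ and $|CB|=d$), its reflection $\B$ is tangent to $AB$ at $B$. Here $B'=B$ and line $B'E$ coincides with line $AB$, which gives the desired tangency.

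For the four ordinary cases (h1), (h3), (h5), (h7), the key input is the observation recorded in the paragraph preceding Theorem \ref{teqs}: the four circles $\Al,\B,\G,\De$ are precisely the incircle together with the three excircles of the right triangle $AEF$. Thus $\B$ is tangent to the line $EF=B'E$ at a unique point $T$, and it suffices to show $T=B'$. I will do this by verifying $|ET|=|EB'|=d$ and that $T$ and $B'$ lie on the same ray from $E$ inside line $EF$. The distance $|EB'|=d$ is immediate from the defining reflection: the crease $m$ sends $C$ to $E$ and $B$ to $B'$, so $|EB'|=|BC|=d$. For $|ET|$, in each ordinary case I identify which of the four incircle/excircles plays the role of $\B$ — it is the one of radius $b$ by Theorem \ref{tep1} — and then apply the standard tangent-length formula from the vertex $E$. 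Concretely, in (h1) $\B$ is the excircle opposite $F$ and $|ET|=s-|EF|$; in (h3) $\B$ is the incircle and $|ET|=s-|AF|$; in (h5) $\B$ is the excircle opposite $E$ and $|ET|=s$; in (h7) $\B$ is the excircle opposite $A$ and $|ET|=s-|AE|$. In each case, Proposition \ref{pt}(ii), Theorem \ref{ghaga}, and the case-specific formula for $d$ in Theorem \ref{teqs}(i) collapse the expression to $d$.

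The same-side check is casewise but short: for the incircle and the excircle opposite $A$, the tangent point on line $EF$ lies on the closed segment $EF$; for the excircle opposite $E$ it lies on the extension beyond $F$; and for the excircle opposite $F$, on the extension beyond $E$. Inspection of Figures \ref{fcase1c}, \ref{fcase3c}, \ref{fcase5c}, \ref{fcase71c} confirms that $B'$ lies in the matching region in each configuration, so $T=B'$. The main obstacle in the proof is therefore the bookkeeping — identifying which excircle is $\B$ in each case and verifying where $B'$ sits relative to segment $EF$; once that is arranged, the underlying algebra collapses to a one-line simplification.
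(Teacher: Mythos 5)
Your proposal is correct and follows essentially the same route as the paper: both arguments reduce the statement to showing that the tangent length from $E$ to $\B$ equals $d=|B'E|$, the paper computing that length along the line $DA$ (where the contact point sits at distance $b$, the radius, from the right-angle vertex $A$) while you compute it along $EF$ via the semiperimeter formulas, with Proposition \ref{pt}(ii) and Theorem \ref{teqs} doing the same work in either version. Your explicit treatment of the degenerate cases and of the same-ray check is a point the paper leaves implicit, but it does not change the method.
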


\begin{proof} 
It is sufficient to consider the ordinary cases. 
In the cases (h1) and (h3), the distance between $E$ and the point of 
contact of $\B$ and the line $DA$ equals 
$|AE|-b=(b+d)-b=d=|B'E|$. Hence 
$B'$ is the point of contact of $\B$ and $B'E$. 
The other cases can be proved similarly. 
\if0 
In the case (h5), 
$d=b+|AE|=b+(d-b)=d=|B'E|$ by Theorem \ref{tep2}. 
In the case (h7), $d=b-|AE|=b-(b-d)=d=|B'E|$ by Theorem \ref{tep2}. 
The proof is now complete.  
\fi 
\end{proof}

If a triangle $AEF$ with right angle at $A$ is given, a square $ABCD$ 
forming the configuration $\CH$ can be constructed as follows. 
Let $\Al$ be the incircle or one of the excircles of $AEF$. Then let 
$\De (\not=\Al)$ be the incircle or one of the excircles of $AEF$ 
with center on the line joining $A$ and the center of $\Al$. Let $C$ be 
the center of $\De$ and let $B$ and $D$ be the feet of perpendiculars 
from $C$ to the lines $AF$ and $AE$, respectively. Then $ABCD$ is the 
desired square. Since there are four choices for the circle $\Al$, we 
can construct four squares from $AEF$. 


\begin{center} 
\includegraphics[clip,width=110mm]{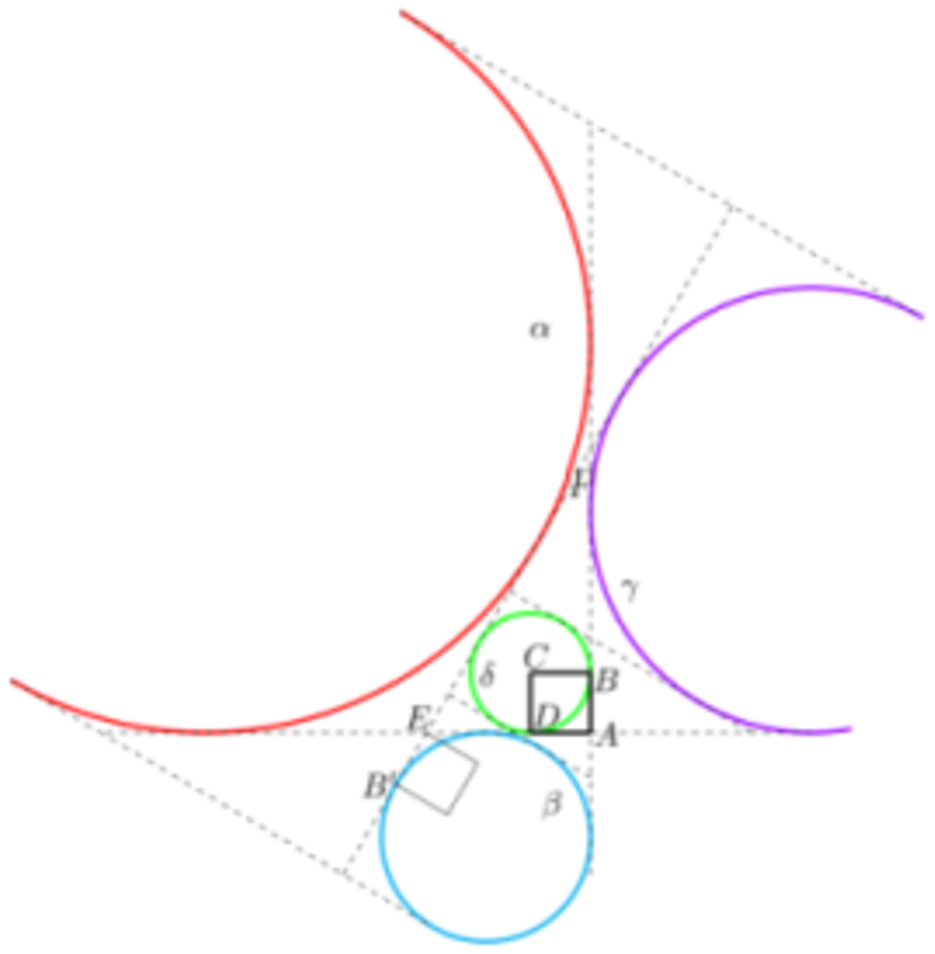}\refstepcounter{num}\label{fcc1}\\
\vskip1mm
Figure \Fg : $a=b+c+d$ in the case (h1)
\end{center}  

\medskip
\begin{center} 
\includegraphics[clip,width=95mm]{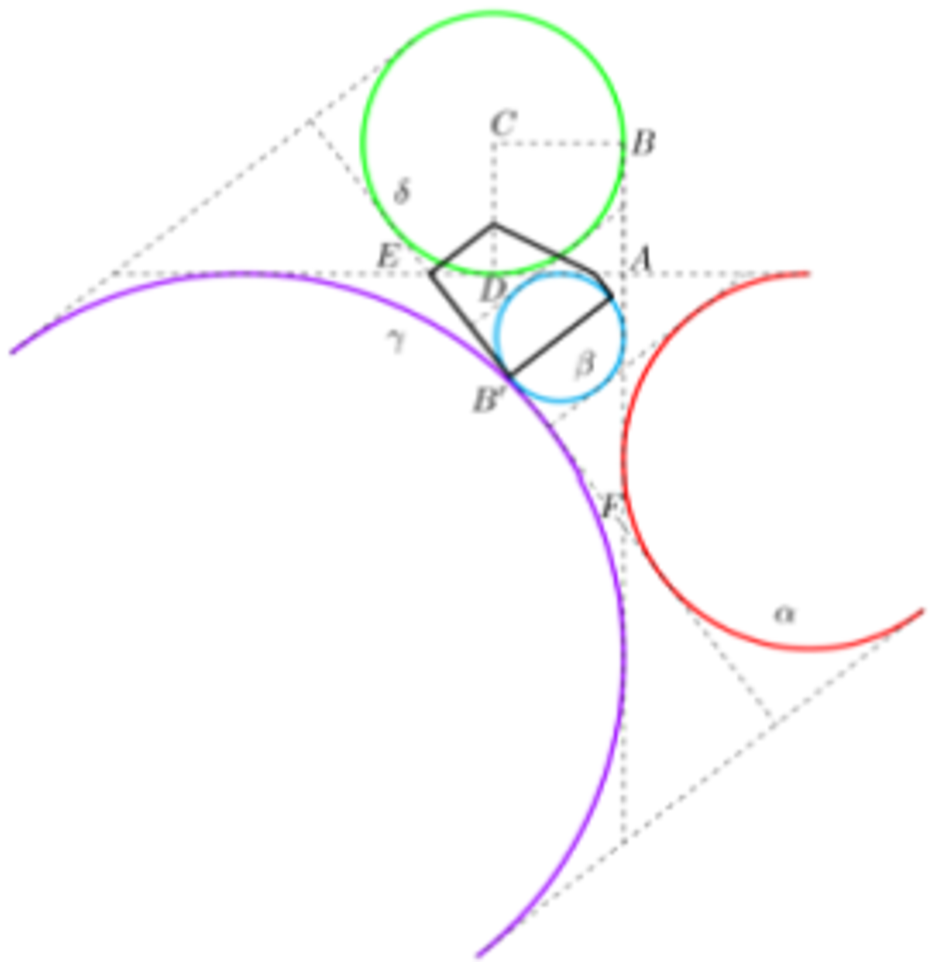}\refstepcounter{num}\label{fcc3}\\
\vskip-1mm
Figure \Fg : $c=a+b+d$ in the case (h3)
\end{center}  

\medskip
\begin{center} 
\includegraphics[clip,width=105mm]{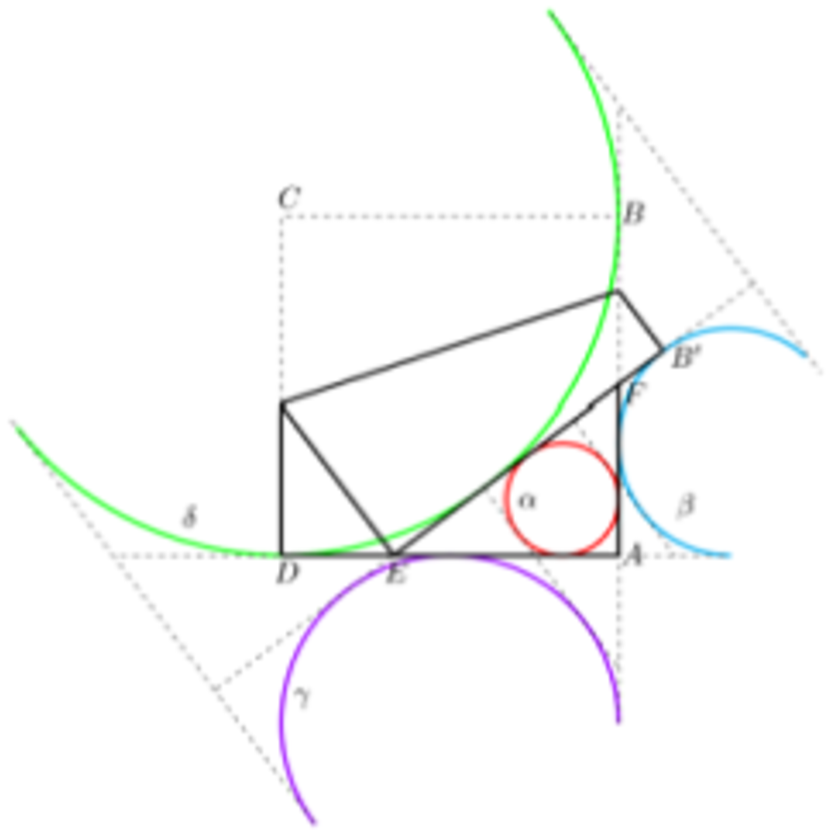}\refstepcounter{num}\label{fcc5}\\
Figure \Fg : $d=a+b+c$ in the case (h5)
\end{center}  

\medskip
\begin{center} 
\includegraphics[clip,width=105mm]{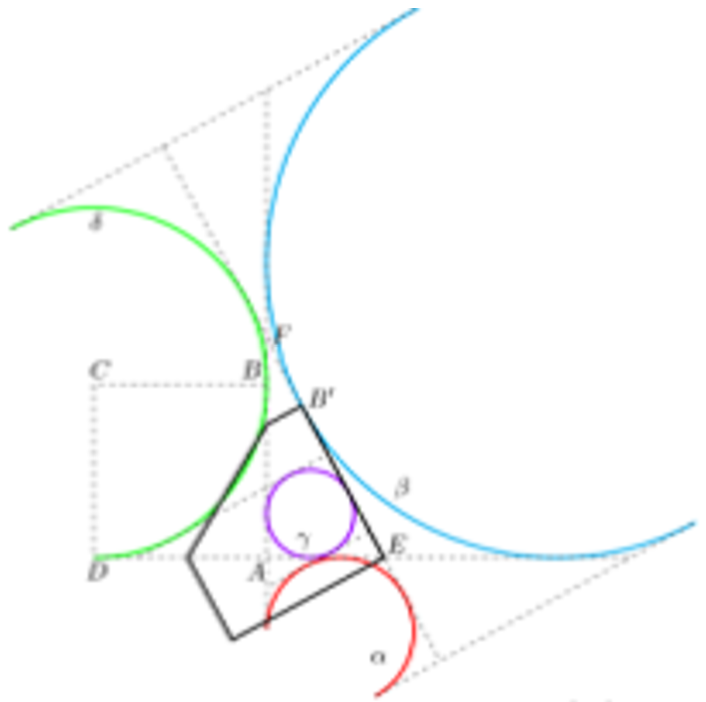}\refstepcounter{num}\label{fcc71}\\
\vskip-1mm
Figure \Fg : $b=a+c+d$ in the case (h7)
\end{center}  
\medskip

\section{Circumcircle of $AEF$}\label{scir}

The circumradius of $AEF$ equals the inradius of the triangle made by 
the lines $BC$, $CD$ and $EF$ in the case (h5) (see Figure \ref{fe2}) 
\cite{OKMM14}. In this section we generalize this result for $\CH$. 
Let $\Ep_1$ be the circle touching the lines $BC$, $CD$ and $EF$ with 
center lying on the lines $m$ and $AC$ (see Figures \ref{fep4h1} to 
\ref{fe3}). The definition shows that $\Ep_1$ is the incircle of $ABCD$ 
in the degenerate cases. Let $r_1$ be the radius of $\Ep_1$. 

\medskip
\begin{minipage}{0.5\hsize} 
\begin{center}
\includegraphics[clip,width=55mm]{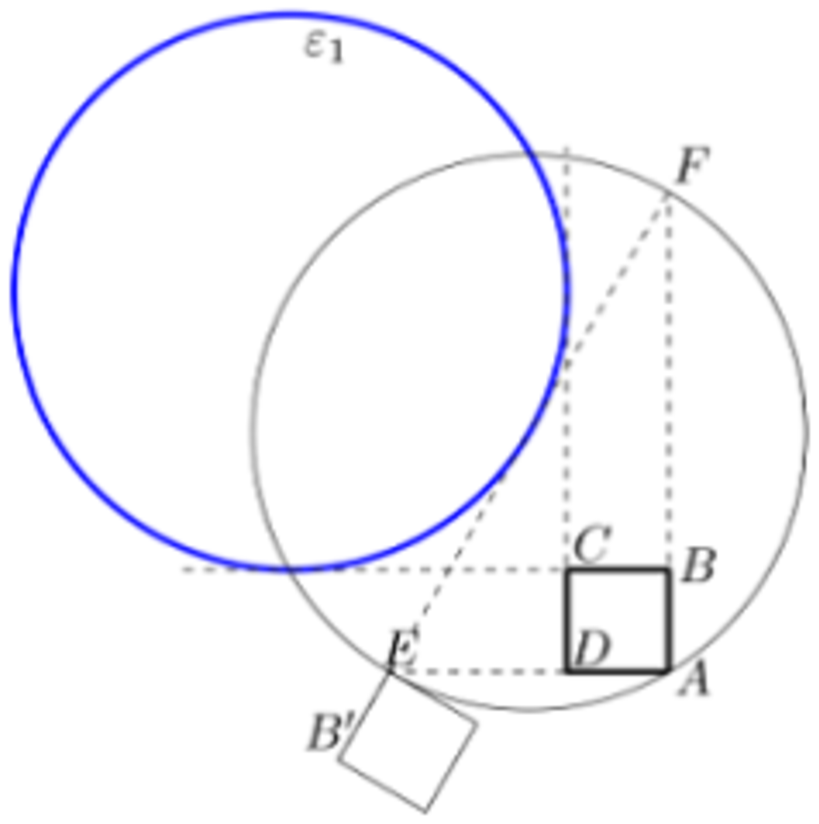}\refstepcounter{num}\label{fep4h1}\\
\vskip2.6mm
Figure \Fg : (h1) 
\end{center} 
\end{minipage}
\begin{minipage}{0.5\hsize}
\begin{center}
\includegraphics[clip,width=58mm]{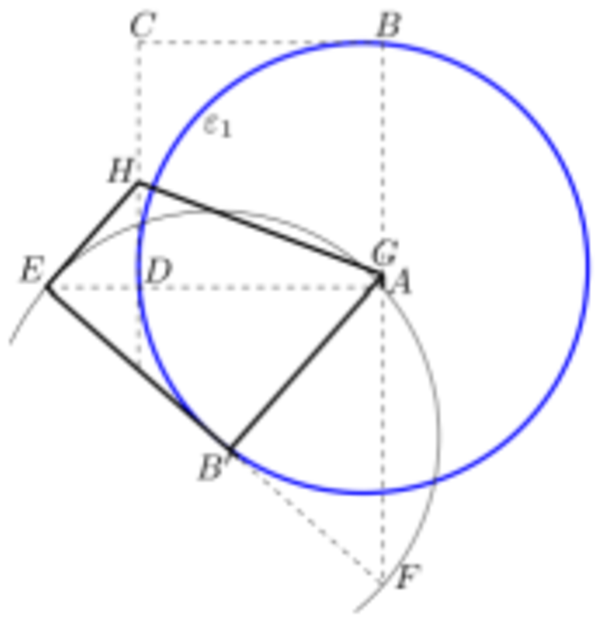}\refstepcounter{num}\label{fe1}\\
Figure \Fg : (h3) 
\end{center}  
\end{minipage} 

\medskip
\begin{minipage}{0.5\hsize}
\begin{center} 
\includegraphics[clip,width=55mm]{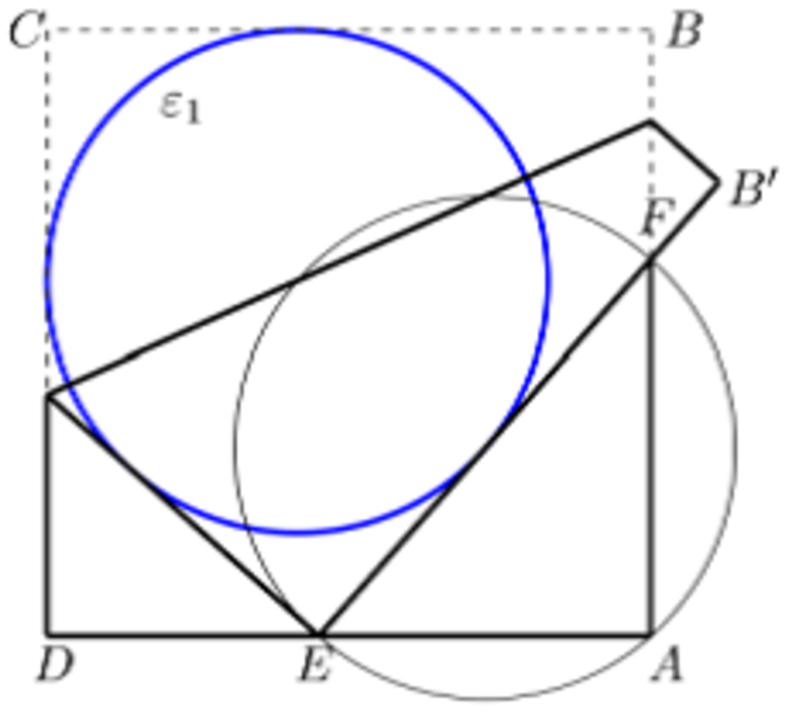}\refstepcounter{num}\label{fe2}\\
\medskip
Figure \Fg : (h5) 
\end{center}  
\end{minipage}
\begin{minipage}{0.5\hsize}
\begin{center} 
\includegraphics[clip,width=58mm]{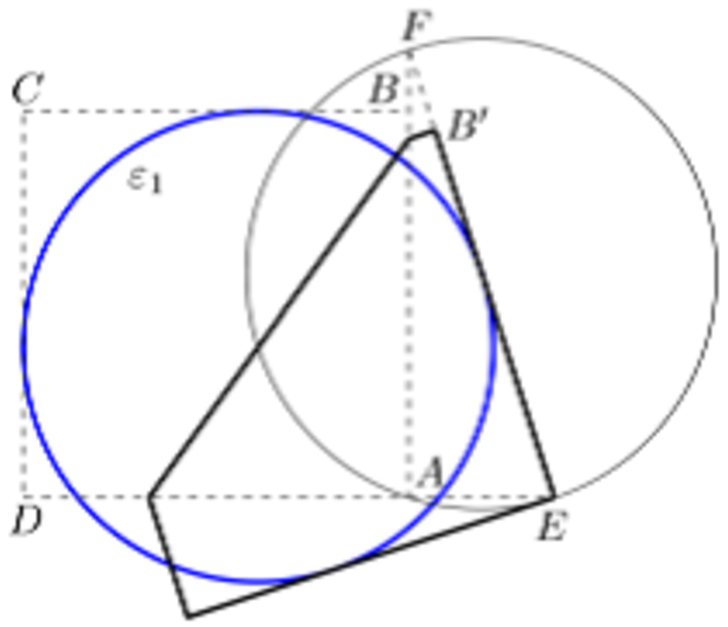}\refstepcounter{num}\label{fe3}\\
Figure \Fg : (h7) 
\end{center} 
\end{minipage} 

\begin{thm}\label{tcircm} The followings are true for $\CH$. \\
\noindent{\rm (i)} The relation $2r_1=|EF|$ holds. \\
\noindent{\rm (ii)} The circle $\Ep_1$ touches $EF$ at its midpoint. 
\end{thm}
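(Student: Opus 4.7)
The plan is to introduce Cartesian coordinates and reduce both parts of the theorem to the identity $ad=bc$ of Theorem \ref{teqs}(ii). Place $A=(0,0)$, $B=(d,0)$, $C=(d,d)$, $D=(0,d)$, and write $E=(0,y_E)$, $F=(x_F,0)$, where $y_E$ and $x_F$ are signed coordinates with $|y_E|=|AE|$ and $|x_F|=|AF|$. Since the center $O$ of $\Ep_1$ lies on the diagonal $AC$ (the line $y=x$), write $O=(t,t)$. The perpendicular distance from $O$ to line $BC$ is $|t-d|$, so $r_1=|t-d|$ and $|OC|^2=2(t-d)^2=2r_1^2$; the condition $O\in m$, i.e., $|OC|=|OE|$, determines $t$ as a function of $y_E$ and $d$.

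The algebraic heart of the proof is the identity
\[
x_F\, y_E \;=\; 2 d\,(x_F + y_E - d).
\]
I will establish this by case analysis: in each of the seven configurations the signed coordinates satisfy $x_F+y_E-d=\pm a$ (using the case-specific formula for $d$ in Theorem \ref{teqs}(i)) and $x_Fy_E=\pm 2bc$ (using $|AE||AF|=2bc$ of Theorem \ref{ghaga}), with the two signs matching, so the identity reduces to $bc=ad$ of Theorem \ref{teqs}(ii).

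Granted this identity, the two parts of the theorem will follow quickly. First, the identity rewrites as $t=(x_F+y_E)/2$, placing $O$ on the perpendicular bisector of $EF$; combined with $|OC|=|OE|$, this makes $O$ the circumcenter of $\triangle CEF$, so $|OC|=|OE|=|OF|=r_1\sqrt{2}$. Second, expanding directly,
\[
(2r_1)^2 \;=\; (2d - x_F - y_E)^2 \;=\; x_F^2 + y_E^2 \;=\; |EF|^2,
\]
the middle equality being exactly the identity and the last being the Pythagorean theorem in right triangle $AEF$; this gives $2r_1=|EF|$, proving (i). For (ii), let $M$ be the foot of the perpendicular from $O$ to line $EF$: since $|OE|=|OF|$, $M$ is the midpoint of $EF$, and by the definition of $\Ep_1$ the tangency point of $\Ep_1$ with $EF$ is precisely $M$.

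The principal obstacle is the sign bookkeeping across the seven cases: the signs of $y_E$ and $x_F$, and whether $t-d$ is positive or negative, depend on which configuration we are in, but once matched, the computation is entirely uniform.
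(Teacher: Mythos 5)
Your proposal is correct, but it takes a genuinely different route from the paper's. The paper proves (i) synthetically: it introduces the intersection $K$ of the lines $BC$ and $EF$, computes $|CK|$ from the similar triangles $BKF\sim AEF$, then obtains $r_1$ from the similar triangles $CKB'\sim AEF$, invoking Theorem \ref{ghaga}, Theorem \ref{teqs} and Proposition \ref{pt}(ii) case by case; for (ii) it uses a reflection argument (the perpendicular to $EF$ at $E$ is the reflection of $CD$ in the crease line $m$, hence tangent to $\Ep_1$, and by (i) the perpendicular at $F$ is tangent as well, forcing the contact point to be the midpoint). You instead put everything in coordinates and reduce both parts to the single signed identity $x_Fy_E=2d(x_F+y_E-d)$, which is exactly the condition that the center $O=(t,t)$ determined by $|OC|=|OE|$ satisfies $t=(x_F+y_E)/2$, i.e.\ that $|OE|=|OF|$. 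This is an attractive unification: (i) becomes $(2r_1)^2=(2d-x_F-y_E)^2=x_F^2+y_E^2=|EF|^2$, and (ii) is immediate since the tangency point is the foot of the perpendicular from $O$ to $EF$, which is the midpoint once $|OE|=|OF|$. I checked that your key identity does hold in every admissible configuration, including the degenerate ones, each time reducing to $ad=bc$ of Theorem \ref{teqs}(ii) via the case formulas for $|AE|$, $|AF|$ and $d$; so the deferred sign bookkeeping is real work but goes through, and is comparable in bulk to the paper's per-case similar-triangle computations. Two small remarks: only six configurations need checking, since (h2) is excluded by the standing assumption of Section \ref{sie} that $F$ exists (and indeed $m$ is parallel to $AC$ there, so $\Ep_1$ is not defined); and your observation that $O$ is the circumcenter of $CEF$ with $|OC|=r_1\sqrt2$ is a pleasant by-product, echoing the circumradius statement that opens Section \ref{scir}, though it is not needed for the proof.
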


\begin{proof} 
We prove (i). The degenerate cases are obvious. 
Assume that the lines $BC$ and $EF$ meet in a point $K$ and $x=|BK|$.  
We prove the case (h3) (see Figure \ref{fe1prf}). Since the triangles 
$BKF$ and $AEF$ are similar, $x/|BF|=|AE|/|AF|$, i.e., 
$x=c|AE|/|AF|$. Therefore by Theorem \ref{teqs}(ii), 
$$
|CK|=x-d=\frac{c|AE|-d|AF|}{|AF|}=\frac{c(b+d)-d(c-d)}{|AF|}
=\frac{bc+d^2}{|AF|}=\frac{d(a+d)}{|AF|}.  
$$ 
From the similar triangles $CKB'$ and $AEF$, we have $|CK|/r_1=|AE|/a$. 
Therefore by Theorems \ref{ghaga}, \ref{teqs} and Proposition 
\ref{pt}(ii), we get 
$$
r_1=\frac{|CK|}{|AE|}a=\frac{d(a+d)a}{|AE||AF|}=\frac{ad(c-b)}{2bc}=
\frac{c-b}{2}=\frac{|EF|}{2}. 
$$
The rest of (i) can be proved in a similar way. We prove (ii). 
It is sufficient to consider the ordinary cases. The perpendicular to 
$EF$ at $E$ touches $\Ep_1$, for it is the reflection of the line 
$CD$ in $m$ and $CD$ touches $\Ep_1$. Therefore the perpendicular to 
$EF$ at $F$ also touches $\Ep_1$ by (i). Hence $\Ep_1$ touches 
$EF$ at its midpoints. 
\end{proof}

\medskip\medskip
\begin{center} 
\includegraphics[clip,width=60mm]{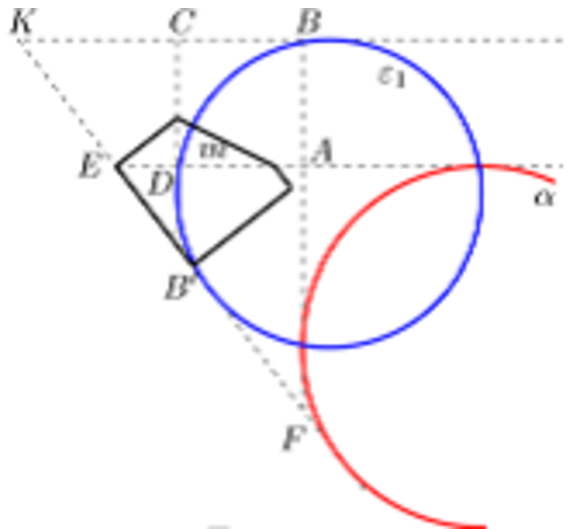}\refstepcounter{num}\label{fe1prf}\\
\vskip-2mm
Figure \Fg . 
\end{center}  

\section{Incircles and excircles of another triangles}\label{san}

Let us assume that the crease line $m$ meets the lines $AB$ and $CD$ in 
points $G$ and $H$, respectively for $\CH$. In this section we consider 
the incircles and the excircles of the triangles $B'FG$ and $DEH$. 
The sum of the inradii of the triangles $B'FG$ and $DEH$ equals $a$ in 
the case (h5) \cite{kkb}, \cite{Tkd} (see Figure \ref{feqs0}). We 
generalize this fact and give several new results. 

\medskip
\begin{center} 
\includegraphics[clip,width=55mm]{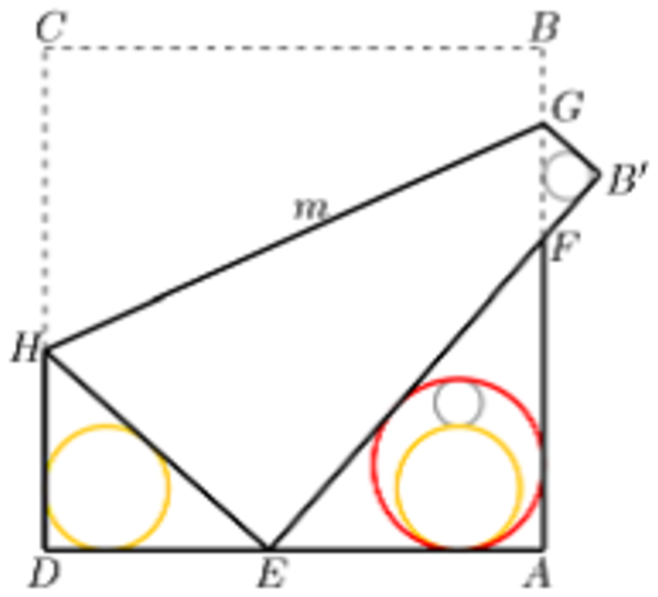}\refstepcounter{num}\label{feqs0}\\
Figure \Fg .
\end{center}

\begin{prop}\label{ppdh} The following relation holds for $\CH$. \\
$a=\left\{
\begin{array}{ll}\nonumber
|FG|-|DH| & \hbox{in the cases}\ \hbox{\rm (h1), (h3)}, \\
|DH|-|FG| & \hbox{in the cases}\ \hbox{\rm (h4), (h5),\ (h6), (h7)}.\\
\end{array} \right. \\
$
\end{prop}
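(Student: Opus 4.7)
The plan is to handle the two degenerate cases (h4), (h6) by direct inspection and to treat the four ordinary cases (h1), (h3), (h5), (h7) uniformly by exhibiting two pairs of similar right triangles built from perpendicularities induced by the fold. In (h4) the crease $m$ is the perpendicular bisector of $CD$, so $G$ and $H$ are the midpoints of $AB$ and $CD$ and $B'=A=F$; thus $a=0$ and $|FG|=|DH|=d/2$. In (h6) the crease passes through $B$ and $D$, so $G=B$, $H=D$, $B'=B=F$, and $a=|FG|=|DH|=0$. In both degenerate cases the identity $|DH|-|FG|=a$ holds trivially.

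For the ordinary cases, the key observation is that the fold, being a reflection across $m$, carries the line $AB$ to the line $GB'$ (since $G\in m$ is fixed and $B\mapsto B'$) and the line $CD$ to the line $EH$ (since $H\in m$ is fixed and $C\mapsto E$). Because $AB\perp BC$ at $B$ and $CD\perp BC$ at $C$, and reflections preserve angles, the images satisfy $GB'\perp B'E$ at $B'$ and $EH\perp B'E$ at $E$; since $F$ lies on $B'E$, we obtain $\angle GB'F=\angle HEF=90^{\circ}$.

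The first of these perpendicularities, combined with the fact that the angle at $F$ in each of the triangles $FGB'$ and $FEA$ is the (acute) angle between lines $AB$ and $EF$, gives the similarity of triangles $FGB'$ and $FEA$, whence $|FG|=a|EF|/|AF|$. The second perpendicularity, together with $\angle AEF+\angle AFE=90^{\circ}$ in the right triangle $AEF$ and a short configuration analysis at $E$ (whether $A$ and $D$ lie on the same or opposite rays), yields $\angle DEH=\angle AFE$ and hence the similarity of triangles $DEH$ and $AFE$, whence $|DH|=b|AE|/|AF|$. Applying Theorem \ref{ghaga} ($|AE||AF|=2bc$) together with Theorem \ref{teqs}(ii) ($ad=bc$) simplifies these to
\[
|FG|=\frac{|AE|\,|EF|}{2d},\qquad |DH|=\frac{|AE|^2}{2c},
\]
and therefore
\[
|FG|-|DH|=\frac{|AE|\bigl(c|EF|-d|AE|\bigr)}{2cd}.
\]

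To finish, substitute the expressions for $|EF|$ from Proposition \ref{pt}(ii) and for $|AE|$ appropriate to each case ($b+d$ in (h1) and (h3); $d-b$ in (h5); $b-d$ in (h7)), and apply the case-specific identity of Theorem \ref{teqs}(i) to rewrite $c|EF|-d|AE|$ as $(c-d)a$, $(c+d)a$, $-(c+d)a$, $(c-d)a$ in cases (h1), (h3), (h5), (h7) respectively; a second application of $ad=bc$ then collapses $|FG|-|DH|$ to $+a$ in cases (h1), (h3) and to $-a$ in cases (h5), (h7), proving the proposition. The main obstacle is the similarity of triangles $DEH$ and $AFE$, since the angle equality $\angle DEH=\angle AFE$ does not come from a shared vertex as in the first similarity but rests on the perpendicularity $EH\perp EF$ — the key geometric new ingredient that the fold produces beyond the tools already used in Section \ref{sghf}.
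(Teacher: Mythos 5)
Your proof is correct, but it takes a genuinely different route from the paper's. The paper argues synthetically: it introduces the point $T$ where $\De$ touches $B'E$ and the point $J=AB\cap DT$, observes that $DT\parallel m$ (so $|DH|=|GJ|$) and that $DAJ\cong CDE$, and then uses the reflection in the line $CF$ --- which swaps the two tangent lines from $F$ to $\De$ and carries $B'$ to $J$ --- to conclude $|DH|=|FG|-|FJ|=|FG|-a$ in one stroke, with no appeal to the metric identities of Sections 3--4. You instead extract the two perpendicularities $GB'\perp B'E$ and $EH\perp EF$ from the fold, obtain $|FG|=a|EF|/|AF|$ and $|DH|=b|AE|/|AF|$ from two similar right triangles, and close with Theorem \ref{ghaga}, Theorem \ref{teqs} and Proposition \ref{pt}(ii); I checked the case-by-case identities ($c|EF|-d|AE|=(c-d)a,\ (c+d)a,\ -(c+d)a,\ (c-d)a$ and the final collapse to $\pm a$) and they all hold. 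The trade-off: the paper's argument is shorter and essentially configuration-free once the reflection is identified, while yours is more mechanical, makes the dependence on the earlier identities explicit, and --- unlike the paper, which simply declares the degenerate cases unnecessary --- verifies (h4) and (h6) directly. Your similarity arguments are sound as stated because in each comparison both angles are acute angles between a fixed pair of lines, so the ``equal or supplementary'' ambiguity cannot arise; it would still be worth one sentence in a final write-up noting that $a\neq 0$, $b\neq d$ in the ordinary cases so that neither triangle $FGB'$ nor $DEH$ degenerates.
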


\begin{proof} 
It is sufficient to consider the ordinary cases. We prove the cases 
(h1) and (h3) (see Figures \ref{fprfh1} and \ref{fprfh3}). Let $T$ be 
the point of contact of the circle $\De$ and the line $B'E$. The lines 
$DT$ and $m$ are parallel, since $DT$ is perpendicular to $CE$. Let $J$ 
be the point of intersection of the lines $AB$ and $DT$. Then the 
triangles $DAJ$ and $CDE$ are congruent. Hence $|AJ|=|DE|=|ET|$. 
Therefore we get $|B'T|=|BJ|$, while $T$ is the reflection of $B$ in 
the line $CF$. Therefore  $J$ is the reflection of $B'$ in $CF$. Hence 
$|DH|=|GJ|=|FG|-|FJ|=|FG|-|B'F|=|FG|-a$. The other cases can be proved 
similarly. 
\end{proof}

\begin{minipage}{0.5\hsize} 
\begin{center} 
\includegraphics[clip,width=58mm]{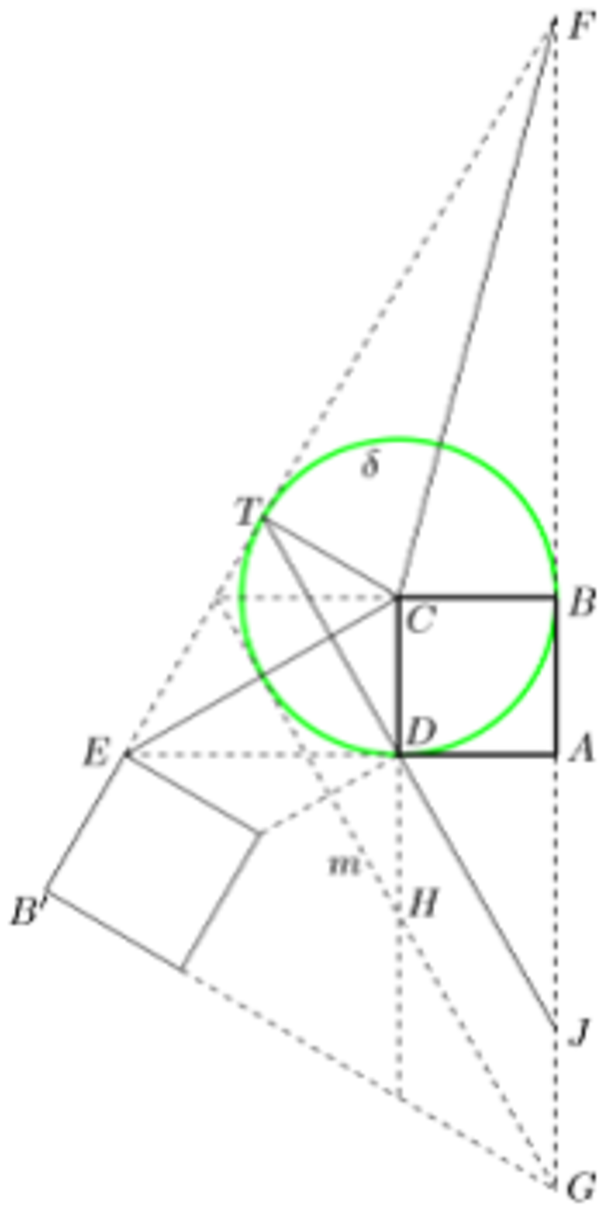}\refstepcounter{num}\label{fprfh1}\\
\vskip-2mm
Figure \Fg : (h1)
\end{center} 
\end{minipage}
\begin{minipage}{0.5\hsize} 
\begin{center} 
\vskip25mm
\includegraphics[clip,width=69mm]{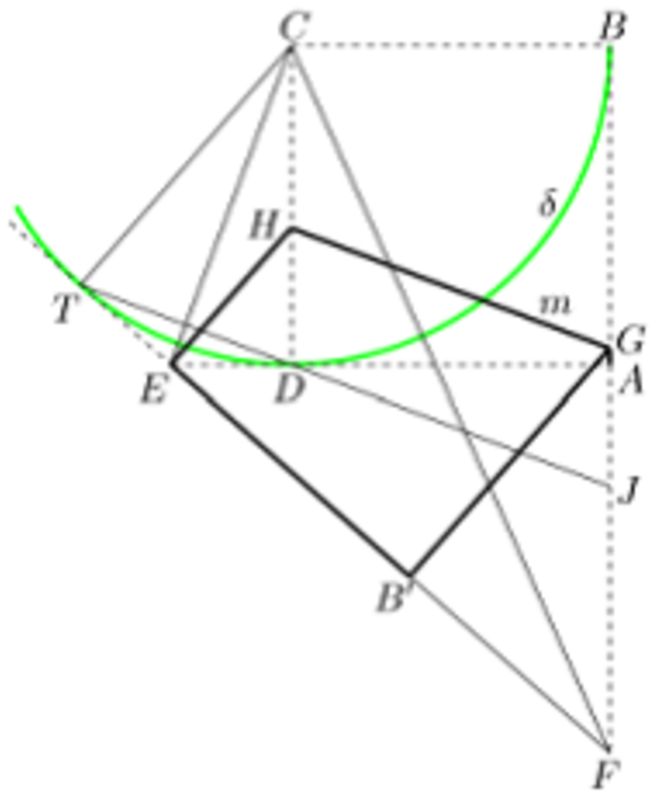}\refstepcounter{num}\label{fprfh3}\\
\vskip-2mm
Figure \Fg : (h3)
\end{center}  
\end{minipage}

\medskip
We define three circles (see Figures \ref{feqs_h1} to \ref{feqs_h7}). 
Let $\Ep_2$ be the excircle of $B'FG$ touching $B'F$ from the side 
opposite to $G$ in the cases (h1) and (h3), the incircle of $B'FG$ 
in the cases (h5) and (h7), and the point $A$ (resp. $B$) in the case 
(h4) (resp. (h6)).
Let $\Ep_3$ be the incircle or one of excircles of $B'FG$ touching 
$B'F$ from the side opposite to $\Ep_2$ in the ordinary cases, and  
the point $A$ (resp. $B$) in the case (h4) (resp. (h6)).
Let $\Ep_4$ be the excircle of $DEH$ touching $DE$ from the side 
opposite to $H$ in the cases (h1) and (h3), the point $D$ in the 
degenerate cases, and the incircle of $DEH$ in the cases (h5) and 
(h7). Let $r_i$ $(i=2,3,4)$ be the radius of $\Ep_i$. 

\begin{thm} \label{t567}
The following relations are true for $\CH$.\\
\noindent{\rm (i)} $r_3=r_4$. \hskip45mm {\rm (ii)} $a=r_2+r_4$. 
\end{thm}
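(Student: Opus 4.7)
My plan is to derive (ii) from (i) via Theorem~\ref{tgz}(iii), and then to reduce (i) to a single geometric identity about the crease line.

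For the reduction, observe first that in every ordinary case $B'FG$ is a right triangle with right angle at $B'$: the reflection in $m$ carries the angle $\angle GBC = 90^{\circ}$ (between segment $GB\subset AB$ and the side $BC$ of the square) to the angle $\angle GB'E$, and $F$ lies on line $B'E$. The leg $B'F$ has length $a$ and is opposite vertex $G$. Unpacking the definitions of $\Ep_2$ and $\Ep_3$, in every ordinary case the pair $\{\Ep_2,\Ep_3\}$ consists precisely of the incircle and the excircle opposite $G$ of $B'FG$ (the two circles tangent to the segment $B'F$, one on each side of it). By Theorem~\ref{tgz}(iii) applied to $B'FG$, the sum of these two radii equals the leg $B'F = a$, so $r_2 + r_3 = a$. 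In the degenerate cases $B' = F$, so $a = 0$, and all of $\Ep_2, \Ep_3, \Ep_4$ are points of radius $0$; hence both (i) and (ii) are trivial there. So it suffices to prove (i) in the ordinary cases.

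For (i), I would express $r_3$ and $r_4$ using the standard formulas for incircle and excircle radii of a right triangle. Using Proposition~\ref{ppdh} to replace $|FG|$ by $|DH|\pm a$, and the reflection identities $|B'G| = |BG|$ and $|EH| = |CH|$, both $r_3$ and $r_4$ collapse to expressions in $|BG|$, $|CH|$, $|DH|$ and $b$. In cases (h5) and (h7), where $\Ep_3$ is the excircle opposite $G$ of $B'FG$ and $\Ep_4$ is the incircle of $DEH$, this gives $r_3 = (|DH|-|BG|)/2$ and $r_4 = (b+|DH|-|CH|)/2$, so $r_3 = r_4$ reduces to $|CH|-|BG| = b$. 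In cases (h1) and (h3), where the roles of incircle and excircle are swapped, the parallel computation reduces $r_3 = r_4$ to $|BG|-|CH| = b$.

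The remaining identity $|CH|-|BG| = \pm b$ has a direct geometric proof: the crease $m$ is perpendicular to $CE$, and $CE$ has horizontal component $d$ (the side length) and vertical component $\pm b$ (the signed displacement of $E$ from $D$ along line $DA$). Hence $m$ has slope of absolute value $d/b$, and its horizontal displacement between the lines $AB$ and $CD$ (separated by vertical distance $d$) is $\pm b$. Since $B$ and $C$ both lie on the vertical line containing the side $BC$ of the square, this horizontal displacement is precisely $|CH|-|BG|$ up to sign, giving the claim. The main obstacle will be careful bookkeeping over the four ordinary cases, since the identification of $\Ep_3$ (incircle versus excircle), the sign in Proposition~\ref{ppdh}, and the positions of $G$ and $H$ relative to the square all vary with the case; once this bookkeeping is in place the substitutions are short.
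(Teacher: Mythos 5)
Your proposal is correct, and its treatment of part (ii) coincides with the paper's: both obtain $r_2+r_3=a$ from Theorem~\ref{tgz}(iii) applied to the right triangle $B'FG$ (right angle at $B'$, leg $B'F=a$) and then substitute $r_3=r_4$. For part (i) the skeleton is also the same --- tangent-length formulas for $r_3$ and $r_4$, the reflection identities $|B'G|=|BG|$ and $|EH|=|CH|$, and a case split --- but you close the computation differently. The paper reduces $r_3=r_4$ to the already-established linear relation among $a,b,c,d$ (Theorem~\ref{teqs}(i)), together with Proposition~\ref{ppdh} in the cases where $|FG|$ and $|DH|$ do not cancel outright; you instead reduce everything to the single identity $|CH|-|BG|=\pm b$, which you prove directly from the direction of the crease $m$ (perpendicular to $CE$, hence of slope $d/b$ in absolute value, so its horizontal run across the strip of width $d$ between the lines $AB$ and $CD$ is $b$). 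That identity is true --- in coordinates with $E=(0,e)$ one finds $|CH|-|BG|=d-e$, which is $+b$ in (h5), (h7) and $-b$ in (h1), (h3) --- and your reduction to it checks out in each case. What the paper's route buys is economy: it reuses Theorem~\ref{teqs}, which was already extracted from the identification of $\{\Al,\B,\G,\De\}$ with the incircle and excircles of $AEF$. What your route buys is independence from that identification: the crease-line identity is elementary and self-contained, at the cost of one extra lemma and the sign bookkeeping you already flag. Either way the argument is sound.
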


\begin{proof} 
We prove (i). It is sufficient to prove the ordinary cases. In the 
case (h1), we have  
$$
r_3=\frac{|B'F|+|B'G|-|FG|}{2}=\frac{a-(|FG|-|BG|)}{2}=\frac{a-c}{2} 
$$
and
$$
r_4=\frac{|DE|-|DH|+|EH|}{2}=\frac{b-|DH|+d+|DH|}{2}=\frac{b+d}{2}.  
$$
Therefore $r_3=r_4$ by Theorem \ref{teqs}(i). 
In the case (h3), we have 
$$
r_3=\frac{|B'F|+|B'G|-|FG|}{2}=\frac{a+c-2|FG|}{2} 
$$
and 
$$
r_4=\frac{|DE|-|DH|+|EH|}{2}=\frac{b-2|DH|+d}{2}. 
$$
Therefore by Theorem \ref{teqs}(i) and Proposition \ref{ppdh}, 
$$
r_3-r_4=\frac{a+c-b-d-2|FG|+2|DH|}{2}=a-|FG|+|DH|=0.
$$
The rest of (i) can be proved similarly. 
\if0 
h(5)
$$
r_3=\frac{a-|B'G|+|FG|}{2}=\frac{a-(c-|FG|)+FG)}{2}=\frac{a-c+2FG}{2} 
$$
and
$$
r_4=\frac{b+|DH|-|EH|}{2}=\frac{b+|DH|-(d-|DH|)}{2}=\frac{b-d+2DH}{2}.  
$$
$$
r_3-r_4=\frac{a-b-c+d+2FG-2DH)}{2}=a+FG-DH=0 
$$
h(7)
$$
r_3=\frac{a-|B'G|+|FG|}{2}=\frac{a-(|FG|-c)+FG)}{2}=\frac{a+c}{2} 
$$
and
$$
r_4=\frac{b+|DH|-|EH|}{2}=\frac{b+|DH|-(d+|DH|)}{2}=\frac{b-d}{2}. 
$$
$r_3-r_4=a-b+c+d=0$
\fi 
By (i) and Theorem \ref{tgz}(iii), $a=r_2+r_3=r_2+r_4$. This proves (ii). 
\end{proof}

\vskip5mm
\begin{minipage}{0.47\hsize} 
\begin{center} 
\includegraphics[clip,width=66mm]{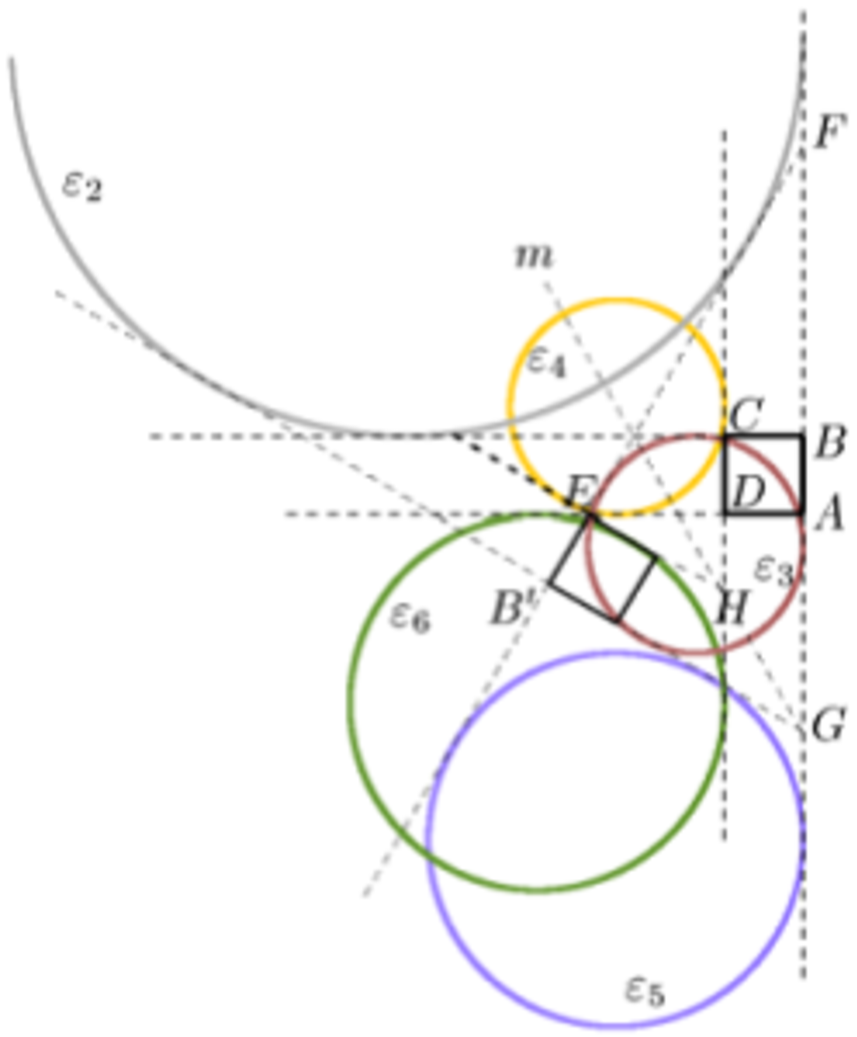}\refstepcounter{num}\label{feqs_h1}\\
Figure \Fg : (h1)
\end{center}  
\end{minipage}
\begin{minipage}{0.56\hsize} 
\begin{center} 
\vskip-5mm
\includegraphics[clip,width=68mm]{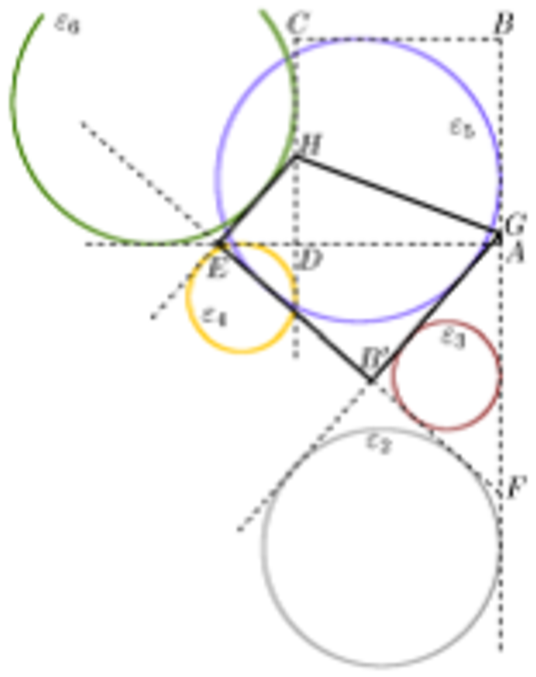}\refstepcounter{num}\label{feqs_h3}\\
Figure \Fg : (h3)
\end{center}  
\end{minipage}

\medskip
\begin{minipage}{0.55\hsize} 
\begin{center} 
\vskip24mm
\includegraphics[clip,width=62mm]{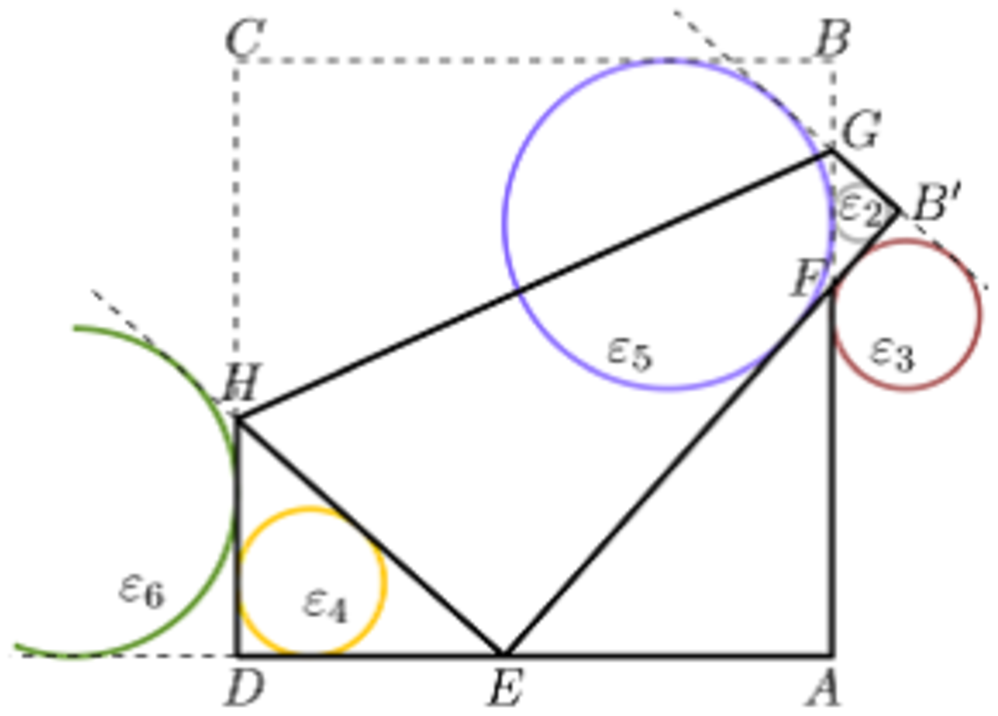}\refstepcounter{num}\label{feqs_h5}\\
\vskip9mm
Figure \Fg : (h5)
\end{center}  
\end{minipage}
\begin{minipage}{0.45\hsize} 
\begin{center} 
\includegraphics[clip,width=56mm]{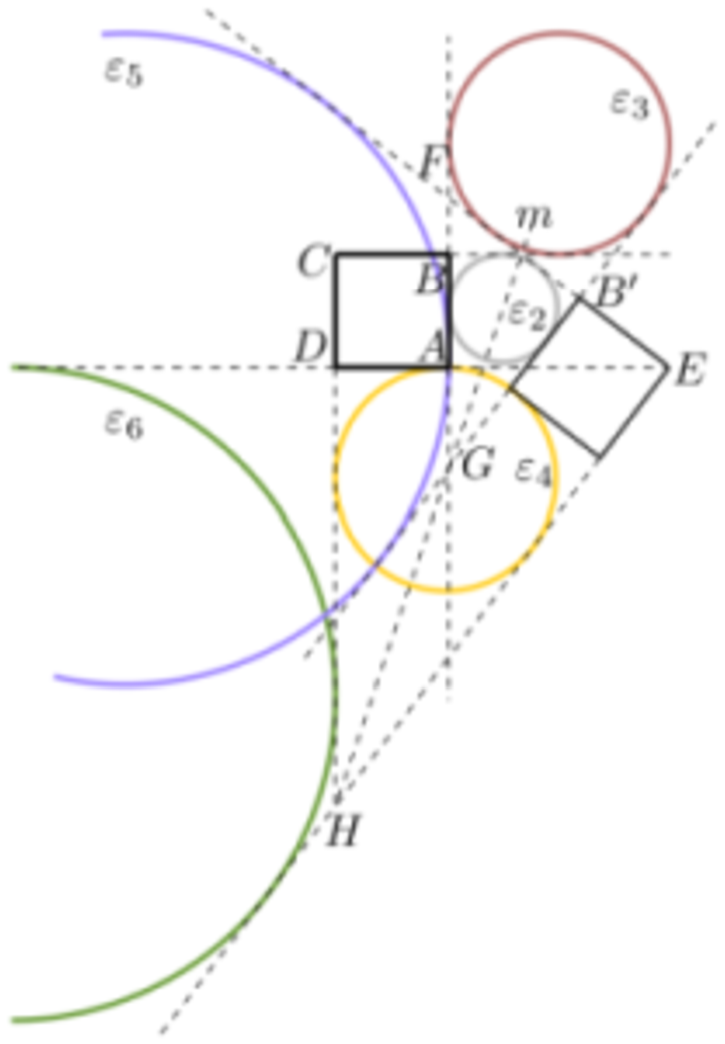}\refstepcounter{num}\label{feqs_h7}\\
\vskip1mm
Figure \Fg : (h7)
\end{center}  
\end{minipage}

We consider two more circles (see Figures \ref{feqs_h1} to 
\ref{feqs_h7}). The circle $\Ep_5(\not=\Ep_3)$ is defined as follows: 
The incircle or one of the excircles of $B'FG$ with center lying on the 
line joining $F$ and the center of $\Ep_3$ in the ordinary cases. The 
incircle of $ABCD$ in the case (h4). The point $B$ in the case (h6). 
Also the circle $\Ep_6(\not=\Ep_4)$ is defined as follows: The incircle 
or one of the excircles of $DEH$ with center lying on the line joining 
$E$ and the center of $\Ep_4$ in the ordinary cases. The reflection of 
the incircle of $ABCD$ in the line $CD$ in the case (h4). The point $D$ 
in the case (h6). The proof of the next theorem is similar to that of 
Theorem \ref{t567}(i) and is omitted.

\begin{thm}
The circles $\Ep_5$ and $\Ep_6$ are congruent. 
\end{thm}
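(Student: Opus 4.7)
The plan is to mimic the proof of Theorem \ref{t567}(i) closely. The degenerate cases (h4) and (h6) are immediate from the definitions: in (h6), $\Ep_5$ and $\Ep_6$ are both single points (radius zero), while in (h4), $\Ep_5$ is the incircle of $ABCD$ and $\Ep_6$ is its reflection in the line $CD$, hence congruent. I would dispose of these first.

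For the four ordinary cases I would first pin down which in/excircles $\Ep_5$ and $\Ep_6$ actually are. In the right triangle $B'FG$ (right angle at $B'$) the four in/excircles split, through $F$, into two pairs: the incircle together with the excircle opposite $F$ (centers on the internal bisector at $F$), and the two excircles opposite $G$ and $B'$ (centers on the external bisector at $F$). An analogous split through $E$ holds in $DEH$. The radius formulas used in the proof of Theorem \ref{t567}(i) show that $\Ep_3$ is the incircle of $B'FG$ in cases (h1), (h3) and the excircle opposite $G$ in cases (h5), (h7), so $\Ep_5$ is the excircle of $B'FG$ opposite $F$ in the first pair of cases and opposite $B'$ in the second. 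Likewise, $\Ep_6$ is the excircle of $DEH$ opposite $D$ in (h1), (h3) and opposite $E$ in (h5), (h7).

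I would then write $r_5$ and $r_6$ via the standard right-triangle semi-perimeter formulas and substitute $|B'G| = |BG|$ (since $G$ lies on the crease $m$) together with $|BG| = \pm(|FG| - c)$ and $|EH| = |HC| = d \pm |DH|$, exactly the identities already used in the proof of Theorem \ref{t567}(i). Each radius then reduces to a combination of $a, b, c, d$ plus a single $(|FG| - |DH|)$ term. Proposition \ref{ppdh} replaces $(|FG| - |DH|)$ by $\pm a$, after which the appropriate signed form of Theorem \ref{teqs}(i) makes $r_5 - r_6$ vanish.

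The main obstacle is pure bookkeeping: six signs must be tracked across the four ordinary cases, namely the signs in $|BG| = \pm(|FG| - c)$, in $|EH| = d \pm |DH|$, in Proposition \ref{ppdh}, and in the signed form of $d$ in Theorem \ref{teqs}(i). Once these are consistently arranged, the algebra is entirely parallel to that of Theorem \ref{t567}(i), which is why the authors declare the proof omitted.
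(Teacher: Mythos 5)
Your proposal is correct and follows essentially the same route as the paper, which declares the proof "similar to that of Theorem \ref{t567}(i)" and omits it: the author's own (suppressed) computations identify $\Ep_5$ and $\Ep_6$ exactly as you do and reduce $r_5-r_6$ to $\pm a+(|FG|-|DH|)$ via the semiperimeter formulas, Proposition \ref{ppdh} and Theorem \ref{teqs}(i). Your sign bookkeeping and the degenerate-case dispatch match the intended argument.
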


\if0 
\begin{proof}
(h1)
$$
r_5=\frac{-a+B'G+FG}{2}=\frac{-a-c+2FG}{2}
$$
$$
r_6=\frac{DE+DH+EH}{2}=\frac{b+d+2DH}{2}
$$
$$
r_5-r_6=\frac{-a-b-c-d+2FG-2DH}{2}=-a+FG-DH=0
$$
h(3)
$$
r_5=\frac{-a+B'G+FG}{2}=\frac{-a+c}{2}
$$
$$
r_6=\frac{b+DH+EH}{2}=\frac{b+d}{2}
$$
h(5)
$$
r_5=\frac{a+B'G+FG}{2}=\frac{a+c}{2}
$$
$$
r_6=\frac{-b+DH+EH}{2}=\frac{-b+d}{2}
$$
h(7)
$$
r_5=\frac{a+B'G+FG}{2}=\frac{a-c+2FG}{2}
$$
$$
r_6=\frac{-b+DH+EH}{2}=\frac{-b+d+2DH}{2}
$$
$$
r_5-r_6=\frac{a+B'G+FG}{2}=\frac{a+b-c-d+2FG-2DH}{2}=a+FG-DH=0
$$
\end{proof}
\fi 

Let $t$ be the remaining common tangent of $\Ep_3$ and $\Ep_5$ in 
the ordinary cases. Then $t$ is perpendicular to $AB$ by Theorem 
\ref{tgz}(i). Hence the triangle formed by the lines $B'G$, $AB$ and 
$t$ is congruent to $DEH$. Therefore the figure consisting of $\Ep_3$ 
and $\Ep_5$ is congruent to the figure consisting of $\Ep_4$ and $\Ep_6$. 

\section{Conclusion}

Haga's fold is generalized, and the recent generalization of Haga's 
theorems holds for the generalized Haga's fold, which is derived as a 
very special case from a theorem on three tangents of a circle. 
There are three similar right triangles in the configuration made by the 
generalized Haga's fold, and the incircles and the excircles of those 
triangles have many simple relationships.


\medskip
Hiroshi Okumura \\
Maebashi Gunma 371-0123, Japan \\
e-mail: \href{mailto:hokmr@protonmail.com}{hokmr@protonmail.com} \\

\end{document}